\newenvironment{@abssec}[1]{%
    \if@twocolumn

      \section*{#1}%
    \else

      \vspace{.05in}\footnotesize
      \parindent .2in
 {\upshape\bfseries #1. }\ignorespaces
    \fi}
\par\vspace{.1in}\fi}
\newenvironment{keywords}{\begin{@abssec}{\keywordsname}}{\end{@abssec}}
\newenvironment{AMS}{\begin{@abssec}{\AMSname}}{\end{@abssec}}
\newcommand\keywordsname{Key words}
\newcommand\AMSname{AMS subject classifications}
\newcommand\AMname{AMS subject classification}
\newtheorem{theorem}{Theorem}
 \newtheorem{lemma}[theorem]{Lemma}
 \newtheorem{corollary}[theorem]{Corollary}
 \newtheorem{proposition}[theorem]{Proposition}
\newtheorem{remark}[theorem]{Remark}
\def\theequation{\arabic{section}.\arabic{equation}}
 \def\thetheorem{\arabic{section}.\arabic{theorem}}
\def\theequation{\arabic{section}.\arabic{equation}}
 \def\thetheorem{\arabic{section}.\arabic{theorem}}
\title{Stationary isothermic surfaces in Euclidean 3-space
\thanks{This research was partially supported by the ERC grant 335079 and the Spanish MINECO grant SEV-2011-0087, and by Grants-in-Aid
for Scientific Research (B) ($\sharp$ 20340031 and $\sharp$ 26287020) and for Challenging Exploratory Research ($\sharp$ 25610024) of Japan Society for the Promotion of Science. The first and third author have also
been supported by the Gruppo Nazionale per l'Analisi Matematica, la Probabilit\`a e le loro Applicazioni (GNAMPA) of the italian Istituto Nazionale di Alta Matematica (INdAM).} }
\author{Rolando Magnanini\thanks{Dipartimento di Matematica U.~Dini,
Universit\` a di Firenze, viale Morgagni 67/A, 50134 Firenze, Italy.
({\tt magnanin@math.unifi.it}).} 
, Daniel Peralta-Salas\thanks{Instituto de Ciencias Matem\'aticas, Consejo Superior de Investigaciones Cient\'ificas, 28049 Madrid, Spain.
({\tt dperalta@icmat.es}).}
  \ and Shigeru Sakaguchi\thanks{Research Center for Pure and Applied Mathematics,
Graduate School of  Information Sciences, Tohoku
University, Sendai, 980-8579,  Japan.
({\tt sigersak@m.tohoku.ac.jp}).}}
\begin{document}

\maketitle

\begin{abstract}
  Let  $\Omega$ be a domain in $\mathbb R^3$ with  $\partial\Omega = \partial\left(\mathbb R^3\setminus \overline{\Omega}\right)$, where $\partial\Omega$ is unbounded and connected, and let $u$ be the solution of  the Cauchy problem for the heat equation  $\partial_t u= \Delta u$ over $\mathbb R^3,$ where the initial data is  
the characteristic function of the set $\Omega^c = \mathbb R^3\setminus \Omega$. We show that, if there exists a stationary isothermic surface $\Gamma$ of $u$ with $\Gamma \cap \partial\Omega = \varnothing$, then both
$\partial\Omega$ and $\Gamma$ must be either parallel planes or  co-axial circular cylinders . This theorem completes the classification of stationary isothermic surfaces in the case that $\Gamma\cap\partial\Omega=\varnothing$ and $\partial\Omega$ is unbounded. To prove this result, we establish a similar theorem for {\it uniformly dense domains } in $\mathbb R^3$, a notion that was  introduced by Magnanini, Prajapat \& Sakaguchi in~\cite{MPS2006tams}. In the proof, we use methods from the theory of surfaces with constant mean curvature, combined with a careful analysis of certain asymptotic expansions and a surprising connection with the theory of transnormal functions. 
\end{abstract}

\begin{keywords}
heat equation,  Cauchy problem, Euclidean 3-space,  stationary isothermic surface, uniformly dense domain, transnormal function, constant mean curvature surface, plane, circular cylinder.
\end{keywords}

\begin{AMS}
Primary 35K05, 35K15; Secondary  53A10, 58J70. 
\end{AMS}

\pagestyle{plain}
\thispagestyle{plain}
\markboth{R. MAGNANINI, D. PERALTA-SALAS, and S. SAKAGUCHI}{Stationary isothermic surfaces in Euclidean 3-space}

\pagestyle{plain}
\thispagestyle{plain}

\section{Introduction}
\label{introduction}
Let $\Omega$ be a domain in $\mathbb R^N$ with  $N \ge 2.$  Consider the unique bounded solution $u = u(x,t)$ of the Cauchy  problem:
\begin{equation}
\label{cauchy}
\partial_t u=\Delta u\ \mbox{ in }\ \mathbb R^N \times (0, +\infty)\quad\mbox{ and }\ u = {\mathcal X}_{\Omega^c}\ \mbox{ on }\ \mathbb R^N \times \{0\},
\end{equation}
where ${\mathcal X}_{\Omega^c}$ denotes the characteristic function of the set $\Omega^c = \mathbb R^N \setminus \Omega$. 
A hypersurface $\Gamma$ in $\mathbb R^N$ is called a {\it stationary isothermic surface} of $u$ 
if at each time $t$ the solution $u$ remains constant on $\Gamma$ (a constant depending on $t$).  The following problem was raised 
in \cite{MPS2006tams}:
$$
\mbox{Classify all the domains $\Omega$ having a stationary isothermic surface.}
$$
\par
For $N=2$ the answer is easy: $\partial\Omega$ is either a circle, a straight line or a couple
of parallel straight lines (see \cite{MPS2006tams}).
One can also easily show that, if $\partial\Omega$ is a sphere, a hyperplane or, up to rescalings, any spherical cylinder
$\mathbb{S}^{k-1}\times\mathbb{R}^{N-k}$, $2\le k\le N-1$, then every level surface of $u$ is a stationary isothermic surface. 
Another interesting example is a helicoid $\mathcal H$ in $\mathbb R^3$. If $\Omega$ is a domain in $\mathbb R^3$ with $\partial\Omega=\mathcal H$, then $\mathcal H$ is a stationary isothermic surface of $u$ (see \cite[p. 4824]{MPS2006tams}).

In order to study this problem, the notion of {\it uniformly dense domains} was introduced in~\cite{MPS2006tams}. Let $B(x,r)$ be the open ball of positive radius $r$ and center $x \in \mathbb R^N$ and define the density
\begin{equation}
\label{average}
\rho(x,r) = \frac {|\Omega\cap B(x,r)|}{|B(x,r)|},
\end{equation}
where $|\Omega\cap B(x,r)|$ and $|B(x,r)|$ denote the $N$-dimensional Lebesgue measure of the sets $\Omega\cap B(x,r)$ and $B(x,r)$, respectively.
As defined in \cite{MPS2006tams}, $\Omega$ is {\it uniformly dense} in the hypersurface $\Gamma$ if and only if there exists $r_0 \in (\mbox{dist}(\Gamma, \partial\Omega), +\infty]$ such that,
for  every fixed $r \in (0, r_0)$,  the function
$x \mapsto \rho(x,r)$ is constant on $\Gamma$.\footnote{This assumption 
can be relaxed: it would be enough to assume that there exist two functions
 $r(x)\in (\mbox{dist}(\Gamma, \partial\Omega), +\infty)$ for $x \in \Gamma$ and $\rho_0(r) \ge 0$ for $r > 0$ such that $\rho(x,r) = \rho_0(r)$ provided that $x \in \Gamma$ and  $0 < r < r(x)$.}  
Thus, if $\partial\Omega = \partial\left(\mathbb R^N \setminus \overline{\Omega}\right)$ and $\Omega$ is uniformly dense in $\Gamma$, it is clear that
any point $x \in \Gamma$ must have the same distance, say $R$, from $\partial\Omega$, i.e. $\Gamma$ and $\partial\Omega$ are parallel hypersurfaces. 

In fact, stationary isothermic surfaces and uniformly dense domains are connected by the formula
\begin{equation}
\label{representation of the solution of the Cauchy problem}
1-u(x,t)=2\,(4\pi t)^{-N/2}\int_0^\infty |\Omega\cap B(x,2\sqrt{t} s)|\, se^{-s^2} ds,
\end{equation}
that can be easily derived by applying the integration by parts formula and a change of variables to the representation formula (2.4) in \cite[page 4825]{MPS2006tams}.  Hence by the arguments used in \cite[Theorem 1.1]{MPS2006tams}, we see that
$\Gamma$ is a stationary isothermic surface of $u$ if and only if $\Omega$ is uniformly dense in $\Gamma$ with $r_0 =\infty$. Heuristically, this means that a stationary isothermic surface collects local and global information about the set $\Omega$, since that formula also informs us that the short and large time behavior of $u$ are respectively linked to the behavior of $\rho(x,r)$ for small and large values of $r$. The results presented in this paper only use the local information about $\Omega$, since they rely on the behavior of $\rho(x,r)$
for small values of $r$, that is when $r_0<\infty$. (Nevertheless, we believe that 
the assumption $r_0=\infty$, besides simplifying some arguments as in \cite{MPS2006tams}, 
is necessary to attempt a classification in general dimension of uniformly dense domains and, of course, stationary isothermic surfaces.)

The case, where $\Gamma \subset \partial\Omega$ and $\Omega$ is uniformly dense in $\Gamma$, is considered in \cite[Theorems 1.2,  1.3, and 1.4]{MPS2006tams}. In particular it is shown that, if $N=3$ and $\partial\Omega$ is connected, then $\partial\Omega$ must be a sphere, a circular cylinder, or a minimal surface. Also, if $\partial\Omega$ is a complete embedded minimal surface of finite total curvature in $\mathbb R^3$, then it must be a plane. 

The case, where $\Gamma\cap\partial\Omega=\varnothing,\ \partial\Omega$ is bounded, and $\Omega$ is uniformly dense in $\Gamma$,  is studied in \cite[Theorem 3.6]{MPS2006tams} and \cite[Theorem 1.2]{MS2013mmas}, where in particular it is shown that $\partial\Omega$ must be a sphere, if it is connected.  Here the boundedness of $\partial\Omega$ enables us to use Alexandrov's sphere theorem to reach the conclusion.
The case, where $\Gamma\cap\partial\Omega=\varnothing,\ \partial\Omega$ is an entire graph over $\mathbb R^{N-1}$, and $\Gamma$ is a stationary isothermic surface of $u$, is analyzed in \cite[Theorem 2.3]{MS2012jde} and in \cite[Theorem 2]{Sa2013springer}, and it is shown that $\partial\Omega$ must be a hyperplane under some additional conditions on $\partial\Omega$. 
In both of these cases the global conditions on $\partial\Omega$ play a key role to reach the conclusions. Therefore, in order  to classify all the cases in which $\Gamma\cap\partial\Omega=\varnothing,\ \partial\Omega$ is unbounded, and $\Omega$ is uniformly dense in $\Gamma$ {\it with no global assumptions on $\partial\Omega$},  a new approach must be developed. In the present paper we show that such a classification is actually possible in $\mathbb R^3$. 

To complete the picture, we mention that an interesting generalization of uniformly dense domains  --- the so called {\it $K$-dense sets} ---
is considered in \cite{ABG}, \cite{MM1} and \cite{MM2}:
they correspond to the case in which the balls $B(x,r)=x+r\,B(0,1)$ in \eqref{average}  are replaced by the family of sets
$x+r\,K$, where $K$ is any fixed reference convex body. 
It is proved that, if $\Omega$ is a $K$-dense set for $\Gamma=\partial\Omega$, $0<|\Omega|<\infty$ and $r_0=\infty$, then both $K$ and $\Omega$ must 
be ellipses  (homothetic to one another) if $N=2$ (\cite{ABG}, \cite{MM1}) and ellipsoids if $N\ge 3$ (\cite{MM2}).

In the present paper, we work in $\mathbb R^3$ and
complete  the classification of unbounded stationary isothermic surfaces initiated in~\cite{MPS2006tams}, by the following theorem and corollary.
 
\begin{theorem}
\label{th:uniformly dense} 
Let $\Omega\subset\mathbb R^3$ be a domain with unbounded and connected boundary $\partial\Omega$ such that $\partial\Omega = \partial\left(\mathbb R^3\setminus \overline{\Omega}\right)$ and let $D$ be a domain with $\overline{D} \subset \Omega$.  Consider a connected component $\Gamma$ of $\partial D$ satisfying 
\begin{equation}
\label{Gamma is the nearest}
\mbox{\rm dist}(\Gamma,\partial\Omega) = \mbox{\rm dist}(\partial D,\partial\Omega)
\end{equation}
and suppose that $D$ 
satisfies the interior cone condition on $\Gamma$.\footnote{This assumption can be replaced by requiring that $\Gamma$ satisfies the following {\it center of mass condition}: for any $x\in\Gamma$, there is a number $r\in (0, r_0)$ such that $x$ is not the center of mass of $\Omega^c\cap B(x,r)$.} 
\par
If $\Omega$ is uniformly dense in $\Gamma$, then $\partial\Omega$ and $\Gamma$ must be  either parallel planes or co-axial circular cylinders.
\end{theorem}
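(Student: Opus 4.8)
The plan is to first convert the uniform-density hypothesis into a local differential-geometric condition on $\Gamma$ and on $\partial\Omega$. Since $\partial\Omega=\partial(\mathbb R^3\setminus\overline\Omega)$ and $\Omega$ is uniformly dense in $\Gamma$, every point of $\Gamma$ has the same distance $R=\mathrm{dist}(\Gamma,\partial\Omega)>0$ from $\partial\Omega$, so $\Gamma$ is a parallel surface of (a piece of) $\partial\Omega$, and the interior cone / center-of-mass condition guarantees that near every point of $\Gamma$ the nearest point on $\partial\Omega$ is realized, giving $\partial\Omega$ a well-defined tangent plane there. I would then expand $\rho(x,r)$ for $x\in\Gamma$ as $r\downarrow R$ (or rescale so that the relevant ball just touches $\partial\Omega$) and compute the leading-order terms of the volume $|\Omega\cap B(x,r)|$ in terms of the principal curvatures of $\partial\Omega$ at the foot point $y(x)$. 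The constancy of $\rho(x,r)$ in $x$ for every small $r$ forces the symmetric functions of these curvatures — in particular the mean curvature $H$ of $\partial\Omega$ — to be constant along $\partial\Omega$ (at least on the part of $\partial\Omega$ that is "seen" by $\Gamma$). This is exactly the place where I would invoke the asymptotic-expansion machinery the introduction promises (Proposition on asymptotics) and keep careful track of which geometric quantity each order of $r$ controls.

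Next, with $\partial\Omega$ now a connected, complete, unbounded surface of constant mean curvature $H$ in $\mathbb R^3$, I would split into the cases $H=0$ and $H\neq0$. If $H=0$, I would try to show $\partial\Omega$ is actually transnormal — i.e. that the normal lines emanating from $\partial\Omega$ behave like those of an isoparametric/transnormal foliation — using the further orders of the asymptotic expansion (the higher-order symmetric curvature functions being constant along $\partial\Omega$ as well). The transnormality of $\partial\Omega$ is the "surprising connection" advertised in the abstract; I would cite the structure theory of transnormal functions/systems in $\mathbb R^3$ to conclude that a complete connected transnormal hypersurface in $\mathbb R^3$ is a plane, a sphere, or a circular cylinder. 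If $H\neq0$, the surface is a non-zero-CMC complete surface, and I would need to rule out everything except the round cylinder: since $\partial\Omega$ is unbounded, Alexandrov's theorem excludes the sphere; Delaunay unduloids/nodoids and more exotic CMC surfaces would be excluded by re-examining the next-order term in the density expansion (which sees, e.g., $H^2-K$ or $|\nabla H|$ type quantities), forcing the second principal curvature to be constant too, hence $\partial\Omega$ totally umbilic-or-cylindrical; combined with $H\neq 0$ unbounded this leaves only the circular cylinder. In either case, $\Gamma$, being a parallel surface at distance $R$ from $\partial\Omega$, is then forced to be a parallel plane or a co-axial circular cylinder, and the boundedness/connectedness hypotheses on $\partial\Omega$ propagate to $\Gamma$.

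The last ingredient is a propagation/continuation argument: the computation above only controls $\partial\Omega$ on the subset of points that are feet of normals from $\Gamma$, so I would need to argue that this "visible" part is open and closed in $\partial\Omega$ — openness from the local nature of the density condition and the implicit function theorem applied to the nearest-point map, closedness from the completeness of $\partial\Omega$ and uniform bounds on the geometry coming from constant mean curvature plus the fixed distance $R$ — so that by connectedness of $\partial\Omega$ the constant-mean-curvature (and ultimately transnormal/cylindrical) conclusion holds globally. One must also check that $\Gamma$ stays embedded and that $D$ does not degenerate, i.e. that $R=\mathrm{dist}(\partial D,\partial\Omega)$ is attained in a stable way; here the hypothesis \eqref{Gamma is the nearest} is used.

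I expect the main obstacle to be the asymptotic analysis of $|\Omega\cap B(x,r)|$ precisely enough to extract not just constancy of $H$ but constancy of the full second fundamental form of $\partial\Omega$ along $\partial\Omega$ — equivalently, identifying which transnormality-type identity falls out at which order — because the ball $B(x,r)$ meets $\partial\Omega$ in a region whose shape depends on all the local geometry of $\partial\Omega$, and disentangling the curvature contributions (and controlling error terms uniformly in $x\in\Gamma$) is delicate. A secondary difficulty is making the transnormal-function argument rigorous under only the regularity that the density condition a priori provides for $\partial\Omega$, so some bootstrap of smoothness of $\partial\Omega$ (from CMC regularity theory) will be needed before the transnormal structure theorem can be applied.
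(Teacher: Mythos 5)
Your proposal misidentifies the geometric quantity that the density expansion controls, and this derails the whole strategy. The first nontrivial coefficient of the expansion of $|\Omega^c\cap B(x,r)|$ (equivalently of $\sigma(x,r)$) for $x\in\Gamma$, $r\downarrow R$, does \emph{not} force the mean curvature of $\partial\Omega$ to be constant; it forces the Weingarten relation $(1-R\kappa_1)(1-R\kappa_2)=c$ on $\partial\Omega$, which is equivalent to the statement that the \emph{intermediate parallel surface} $\Gamma_*$ at distance $\rho_*=R/(1+\sqrt c)$ from $\partial\Omega$ has constant mean curvature $H^*=(1-c)/(2R\sqrt c)$ --- not $\partial\Omega$ itself. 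Moreover, your hope that higher orders will give ``constancy of the full second fundamental form'' (so that $\partial\Omega$ becomes a transnormal/isoparametric hypersurface) cannot be realized: the next coefficient is $h(K)+g$, where $h$ is a quadratic polynomial in the Gauss curvature $K$ of $\partial\Omega$ and $g\le 0$ involves \emph{third-order} derivatives of the local graph function, which translate into $\|\nabla K^*\|^2$ on $\Gamma_*$. So what the second order actually yields is a transnormal-\emph{function} equation $\|\nabla K^*\|^2=\Psi(K^*)$ for the Gauss curvature of the CMC surface $\Gamma_*$, not an isoparametric condition on $\partial\Omega$. Your $H=0$ case therefore begs the question: catenoids and helicoids are perfectly compatible with all the first-order information (they are minimal, and their parallel surfaces satisfy the Weingarten relation), and citing ``a complete transnormal hypersurface in $\mathbb R^3$ is a plane, sphere or cylinder'' assumes exactly what must be proved.

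The second gap is the global classification step. With only completeness and unboundedness you cannot reduce to Delaunay surfaces, catenoids and helicoids: properly embedded CMC and minimal surfaces in $\mathbb R^3$ without a finite-topology hypothesis form a much larger class. The paper gets the topology ($\Gamma_*$ homeomorphic to $\mathbb S^1\times\mathbb R$ or $\mathbb R^2$) from Miyaoka's theorem applied to the transnormal function $K^*$, then invokes Meeks and Korevaar--Kusner--Solomon (nonzero CMC: cylinder or unduloid) and Schoen/L\'opez--Ros, Collin, Meeks--Rosenberg (minimal: plane, catenoid, helicoid), and finally excludes the unduloid, catenoid and helicoid by exploiting the sign $g\le 0$, the vanishing of $g$ at symmetry points of those surfaces, and the constancy of $h(K)+g$ on $\partial\Omega$ (or, in the alternative proof, by checking directly that $\|\nabla K\|^2$ fails to be a polynomial in $K$ on these three surfaces). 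Your sketch has no mechanism for any of these exclusions --- ``re-examining the next-order term \dots forcing the second principal curvature to be constant too'' is not something the expansion delivers --- nor for the propagation issue you raise, which the paper resolves at the level of Lemma \ref{le:regularity} (analyticity of $\Gamma$ and $\gamma$, injectivity of the normal map, and the open-closed argument on the set $J$) before any curvature classification is attempted.
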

\begin{corollary}
\label{th:stationary isothermic} Let $\Omega$, $D$ and $\Gamma$ be as in {\rm Theorem~\ref{th:uniformly dense}}. Assume that  $\Gamma$ is a stationary isothermic surface of the solution $u$ of the Cauchy problem~\eqref{cauchy}. 
\par
Then $\partial\Omega$ and $\Gamma$ must be  either parallel planes or co-axial circular cylinders.
\end{corollary}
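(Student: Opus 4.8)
The plan is to reduce the corollary to Theorem~\ref{th:uniformly dense}, which has already been established. The only thing that has to be checked is that the hypotheses of the theorem are met once we know that $\Gamma$ is a stationary isothermic surface of the solution $u$ of~\eqref{cauchy}. All the structural assumptions on $\Omega$, $D$ and $\Gamma$ (namely $\partial\Omega$ unbounded and connected, $\partial\Omega=\partial(\mathbb R^3\setminus\overline\Omega)$, $\overline D\subset\Omega$, the nearest-component condition~\eqref{Gamma is the nearest}, and the interior cone condition on $\Gamma$) are carried over verbatim from Theorem~\ref{th:uniformly dense}, so the sole task is to show that $\Omega$ is uniformly dense in $\Gamma$.

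The key step is the equivalence, recalled in the introduction, between stationary isothermic surfaces and uniformly dense domains. Starting from formula~\eqref{representation of the solution of the Cauchy problem} with $N=3$, namely
\begin{equation*}
1-u(x,t)=2\,(4\pi t)^{-3/2}\int_0^\infty |\Omega\cap B(x,2\sqrt{t}\,s)|\, s\,e^{-s^2}\,ds,
\end{equation*}
I would substitute $|\Omega\cap B(x,2\sqrt t\,s)| = \rho(x,2\sqrt t\,s)\,|B(0,1)|\,(2\sqrt t\,s)^{3}$ from the definition~\eqref{average} of the density, so that after the change of variable the quantity $1-u(x,t)$ is expressed as a fixed positive constant times $\int_0^\infty \rho(x,2\sqrt t\,s)\,s^{4}e^{-s^2}\,ds$. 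Since $\Gamma$ is a stationary isothermic surface, $x\mapsto 1-u(x,t)$ is constant on $\Gamma$ for every $t>0$; hence the function $x\mapsto \int_0^\infty \rho(x,2\sqrt t\,s)\,s^{4}e^{-s^2}\,ds$ is constant on $\Gamma$ for every $t>0$. Because this family of integral transforms of $r\mapsto\rho(x,r)$, as $t$ ranges over $(0,\infty)$, determines $\rho(x,\cdot)$ (this is precisely the argument used in \cite[Theorem 1.1]{MPS2006tams}, and amounts to injectivity of a Laplace/Weierstrass-type transform together with continuity of $\rho(x,\cdot)$), we conclude that $x\mapsto\rho(x,r)$ is constant on $\Gamma$ for every fixed $r>0$. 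In other words $\Omega$ is uniformly dense in $\Gamma$ with $r_0=\infty\in(\mbox{\rm dist}(\Gamma,\partial\Omega),+\infty]$.

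With uniform density in hand, Theorem~\ref{th:uniformly dense} applies directly and yields that $\partial\Omega$ and $\Gamma$ are either parallel planes or co-axial circular cylinders, which is the desired conclusion. I do not expect any serious obstacle here: the substance of the corollary lies entirely in Theorem~\ref{th:uniformly dense}, and the reduction is a short transform-injectivity argument already present in the literature. The only point requiring a modicum of care is justifying the interchange of the constancy-on-$\Gamma$ property with the inversion of the transform — i.e.\ verifying that constancy of all the $t$-transforms on $\Gamma$ forces pointwise constancy of $\rho(x,r)$ on $\Gamma$ for each $r$ — but this follows from the uniqueness theorem for the relevant integral transform exactly as in \cite[Theorem 1.1]{MPS2006tams}.
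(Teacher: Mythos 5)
Your reduction is correct and is exactly the route the paper takes: it invokes the equivalence (via the representation formula \eqref{representation of the solution of the Cauchy problem} and the transform-injectivity argument of \cite[Theorem 1.1]{MPS2006tams}) between $\Gamma$ being a stationary isothermic surface and $\Omega$ being uniformly dense in $\Gamma$ with $r_0=\infty$, and then applies Theorem~\ref{th:uniformly dense}. No gaps; the corollary indeed rests entirely on the theorem.
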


Recall that any point $x \in \Gamma$ has the same distance $R$ from $\partial\Omega$ if $\partial\Omega = \partial\left(\mathbb R^3 \setminus \overline{\Omega}\right)$ and $\Omega$ is uniformly dense  in $\Gamma$.
By observing that 
\begin{eqnarray*}
&&1-\rho(x,r) = \frac {|\Omega^{c}\cap B(x,r)|}{|B(x,r)|}\ \mbox{ for every } r >0,
\\
&&\mbox{ and }\ \frac d{dr}|\Omega^{c}\cap B(x,r)| = |\Omega^{c}\cap\partial B(x,r)|\ \mbox{ for almost every } r >0,
\end{eqnarray*}
where $|\Omega^{c}\cap\partial B(x,r)|$ denotes the $2$-dimensional Hausdorff measure of the set $\Omega^{c}\cap\partial B(x,r)$, we introduce another ratio:
\begin{equation}
\label{surface-measure-in-uniformly-dense}
\sigma(x,r) = \frac {|\Omega^{c}\cap \partial B(x,r)|}{|\partial B(x,r)|}.
\end{equation}
Then, the proof of Theorem \ref{th:uniformly dense} relies on the observation that, being $\Omega$ uniformly dense in $\Gamma$,  $\sigma(x,r)$ and all the coefficients of its asymptotic expansion for $r\to R+0$ are independent of $x$ if $x\in\Gamma$. The regularity in $r$ of $\sigma(x,r)$ descends from that of the uniformly dense domain $\Omega$, that we derive in Section \ref{section2} in general dimension and  under the assumption that  $\partial\Omega = \partial\left(\mathbb R^N\setminus \overline{\Omega}\right)$.
The computation of the first coefficient of the asymptotic expansion of $\sigma(x,r)$ for $r\to R+0$
was already carried out in \cite{MPS2006tams} for any $N$, while that of the second one is performed for $N=3$ in Proposition \ref{prop:asymptotics}, which is the most technical part of the paper (in the Appendix, we collect the calculations of some definite integrals needed in its proof).
\par
A key role is played by Propositions \ref{prop:constant mean curvature} and \ref{prop:asymptotics-2}
that give useful geometrical insight about the first and second coefficients of
the aforementioned asymptotic expansion.
In summary, if $\Omega$ is uniformly dense in $\Gamma$, those propositions imply the existence of an intermediate surface $\Gamma_*$ between $\partial\Omega$ and $\Gamma$, parallel to both $\partial\Omega$ and  $\Gamma$, that has constant mean curvature $H^*$ (Proposition \ref{prop:constant mean curvature}) and of a polynomial $\Psi=\Psi(t)$ of degree $4$ at most such that
\begin{equation}
\label{transnormal}
\Vert\nabla K^*\Vert^2=\,\Psi( K^*)  \ \mbox{ on } \ \Gamma_*
\end{equation}
 (see Proposition \ref{prop:asymptotics-2} and
the arguments yielding \eqref{transnormal Gaussian curvature} in Section \ref{section4}). Here, $\Vert\nabla K^*\Vert$ is the length of the gradient 
(with respect to the induced metric of $\Gamma_*$) of the Gauss curvature $K^*$ of $\Gamma_*$. In particular, \eqref{transnormal} tells us that $K^*$ is a {\it transnormal function}  if  $K^*$ is not constant(see \cite{W1987MathAnn}, \cite{Mi2013DiffGeoAppl}, \cite{B1973QJMathOxford}).
\par
The proof of our classification of uniformly dense sets and stationary isothermic surfaces --- Theorem \ref{th:uniformly dense}  and Corollary \ref{th:stationary isothermic} --- is in Section \ref{section4}. We obtain it in two ways: by combining ideas from the theories of minimal surfaces and surfaces with constant mean curvature properly embedded in $\mathbb R^3$, and the theory of transnormal functions; by directly checking that the Gauss curvature of catenoids, helicoids and unduloids (that, together with planes and circular cylinders, are the only surfaces of constant mean curvature that we need to consider, as we will show) does not satisfy Eq.~\eqref{transnormal} with a polynomial function $\Psi$.
\par
Finally, in Section \ref{section5} we present a generalization of \cite[Theorem 1.4, p. 4824]{MPS2006tams} by using the theory of properly embedded minimal surfaces of finite topology in $\mathbb R^3$. 


\setcounter{equation}{0}
\setcounter{theorem}{0}

\section{Regularity of uniformly dense sets}
\label{section2}

For later use, we introduce some notations and recall some well-known facts.
We define the parallel surface 
$$
\Gamma_\rho=\{x\in\Omega:\mbox{\rm dist}(x,\partial\Omega)=\rho\} \quad \mbox{for} \quad 0<\rho<R.
$$
Also, $\nu$ and  $\kappa_1, \cdots, \kappa_{N-1}$ will denote the {\it inward} unit normal vector to $\partial\Omega$ and the principal curvatures of $\partial\Omega$ with respect to $\nu$ at a point $\xi\in\partial\Omega$. For notational simplicity, the explicit dependence of these quantities on the point $\xi$ will be indicated only when it is needed to avoid ambiguities. However, be aware that $\hat\nu$ and 
$\hat\kappa_1, \cdots, \hat\kappa_{N-1}$ will denote the {\it outward} unit normal vector to $\partial D$  on $\Gamma$ and the principal curvatures of $\Gamma$ with respect to $\hat\nu$ at the point $x=\xi+R\,\nu\in\Gamma$.

In the spirit of \cite[Lemma 3.1]{MS2013mmas} and
by the arguments used in \cite[the proofs of Theorems 2.4 and 2.5]{MPS2006tams}, we obtain the following lemma 
that is partially motivated by Remark \ref{rmk:self-contact} below.

\begin{lemma}
\label{le:regularity} Let $\Omega$ be a domain in $\mathbb R^N$ with $N \ge 2$ and $\partial\Omega = \partial\left(\mathbb R^N\setminus \overline{\Omega}\right)$, and let $D$ be a domain in $\mathbb R^N$ with $\overline{D} \subset \Omega$. Consider a connected component $\Gamma$ of $\partial D$ satisfying \eqref{Gamma is the nearest} and suppose that $D$ satisfies the interior cone condition on $\Gamma$.

If $\Omega$ is uniformly dense in $\Gamma$, then the following properties hold:

\begin{itemize}
\item[\rm (1)] There exists a number $R > 0$ such that $\mbox{\rm dist}(x, \partial\Omega) = R$ for every $x \in \Gamma$; 
\item[\rm (2)] $\Gamma$ is a real analytic hypersurface embedded in $\mathbb R^N$;
\item[\rm (3)] there exists a connected component $\gamma$ of $\partial\Omega$, which is also a real analytic hypersurface embedded in $\mathbb R^N$,
such that the mapping $\gamma \ni \xi \mapsto x(\xi) \equiv \xi + R\nu(\xi) \in \Gamma$ 
is a diffeomorphism; 
in particular, $\gamma$ and $\Gamma$ are parallel hypersurfaces at distance $R$;
\item[\rm (4)] it holds that
\begin{equation}
\label{bounds of curvatures}
\kappa_j< \frac 1R\ \mbox{ on } \gamma,  \mbox{ for every } \ j=1,\dots, N-1;
\end{equation}

\item[\rm (5)] there exists a number $c > 0$ such that
\begin{equation}
\label{monge-ampere}
\prod_{j=1}^{N-1} \left( 1-R\kappa_j\right) = c\ \mbox{ on }\ \gamma.
\end{equation}
\end{itemize}
\end{lemma}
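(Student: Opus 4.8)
The plan is to prove the five assertions essentially in the stated order, following the scheme of \cite[Lemma 3.1]{MS2013mmas} and the arguments in \cite[the proofs of Theorems 2.4 and 2.5]{MPS2006tams}. For property (1), set $d(x)=\mathrm{dist}(x,\partial\Omega)$ for $x\in\Omega$. Since $\partial\Omega=\partial(\mathbb R^N\setminus\overline\Omega)$, every ball centered at a point of $\partial\Omega$ contains a set of positive Lebesgue measure lying in $\mathbb R^N\setminus\overline\Omega\subset\Omega^c$; hence, from \eqref{average}, $\rho(x,r)=1$ exactly when $r\le d(x)$. If $d$ were not constant on $\Gamma$, then, since $r_0>\mathrm{dist}(\Gamma,\partial\Omega)=\inf_\Gamma d$, one could pick $r\in(0,r_0)$ with $\inf_\Gamma d<r<\sup_\Gamma d$ and points $x_1,x_2\in\Gamma$ with $\rho(x_1,r)<1=\rho(x_2,r)$, contradicting uniform density. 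Thus $d\equiv R$ on $\Gamma$ for a constant $R$, and $R>0$ because $\Gamma\subset\partial D\subset\overline D\subset\Omega$ with $\Omega$ open; by \eqref{Gamma is the nearest} one also has $R=\mathrm{dist}(\partial D,\partial\Omega)$.

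Next I would extract the local geometry behind (2)--(5). For $x\in\Gamma$ the ball $B(x,R)$ misses $\partial\Omega$ and, being connected and meeting $\Omega$, is contained in $\Omega$; hence each nearest point $\xi\in\partial\Omega$ of $x$ satisfies $|x-\xi|=R$ and $\Omega$ has an interior ball of radius $R$ at $\xi$. Consequently $\partial\Omega$ is a $C^{1,1}$ graph near the set $\gamma$ of such points, the inward normal $\nu$ is defined there, $\mathrm{dist}(\xi+R\nu(\xi),\partial\Omega)=R$ for $\xi\in\gamma$ (no focal point is reached before distance $R$), and the principal curvatures satisfy $\kappa_j\le 1/R$ almost everywhere on $\gamma$. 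Now, as in \cite[the proofs of Theorems 2.4 and 2.5]{MPS2006tams}, one computes the leading-order behavior, as $r\to R+0$, of $|\Omega^c\cap B(x,r)|$ for $x\in\Gamma$: modulo a dimensional constant it equals $(r-R)^{(N+1)/2}\big(\prod_{j=1}^{N-1}(1-R\kappa_j)\big)^{-1/2}$ evaluated at a nearest point of $x$. Since uniform density forces $|\Omega^c\cap B(x,r)|$ to be constant on $\Gamma$ for each $r\in(0,r_0)$, comparing leading orders shows both that every $x\in\Gamma$ has a unique nearest point and that $\prod_{j=1}^{N-1}(1-R\kappa_j)$ is almost everywhere equal on $\gamma$ to a constant $c\ge0$; this is \eqref{monge-ampere}. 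The delicate point --- and, I expect, the main obstacle --- is to exclude $c=0$: if $c=0$, then at every nearest point some factor $1-R\kappa_j$ vanishes, the leading term above is absent, and a refinement of the expansion of $\nabla_x|\Omega^c\cap B(x,r)|$ would force every $x\in\Gamma$ to be the center of mass of $\Omega^c\cap B(x,r)$ for all small $r-R$, contradicting the interior cone condition (equivalently, the center-of-mass condition of the footnote). Once $c>0$, property (4) follows at once, since $0\le 1-R\kappa_j\le 1$ and the factors multiply to $c$.

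Finally, with $c>0$ in hand, I would upgrade the regularity and globalize. Writing $\partial\Omega$ near $\gamma$ as the graph of a $C^{1,1}$ function $f$, equation \eqref{monge-ampere} becomes a fully nonlinear equation $F(\nabla f,D^2f)=c$ with $F$ real analytic, which by \eqref{bounds of curvatures} is uniformly elliptic and which, raised to the power $1/(N-1)$, is concave in $D^2f$ on the relevant range (since $\det^{1/(N-1)}$ is concave on positive matrices). Hence the Evans--Krylov theorem followed by a Schauder bootstrap gives $f\in C^\infty$, and analytic hypoellipticity then gives $f$ real analytic. Therefore $\gamma$ is a real-analytic embedded hypersurface, and, since $\partial\Omega=\partial(\mathbb R^N\setminus\overline\Omega)$ and $\Gamma$ is connected, $\gamma$ is a whole connected component of $\partial\Omega$. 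The normal map $\gamma\ni\xi\mapsto\xi+R\nu(\xi)$ is then real analytic with Jacobian $\prod_{j=1}^{N-1}(1-R\kappa_j)=c\neq0$, hence a local diffeomorphism; a connectedness argument --- using the cone condition and $R=\mathrm{dist}(\partial D,\partial\Omega)$ to see that its image is both open and closed in the connected set $\Gamma$ --- shows it is a diffeomorphism of $\gamma$ onto $\Gamma$. This yields properties (2) and (3) and completes the proof.
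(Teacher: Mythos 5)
Your proposal has genuine gaps, and the most serious one is that property (2) is never actually proved. In the paper, the smoothness of $\Gamma$ is the foundation of everything else, and it is obtained \emph{first}, directly from uniform density: for radial $\psi(x)=\eta(|x|)$ with $\eta\in C^\infty_0(0,r_0)$, the convolution $\psi\star\mathcal X_{\Omega^c}$ is $C^\infty$ and constant on $\Gamma$, and the interior cone condition enters exactly here --- if every such convolution had vanishing gradient at some $x_0\in\Gamma$, then $x_0$ would be the center of mass of $\Omega^c\cap B(x_0,r)$ for all $r\in(0,r_0)$, contradicting the cone condition --- so the implicit function theorem makes $\Gamma$ a $C^\infty$ hypersurface. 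In your scheme the cone condition is invoked only vaguely, to exclude $c=0$ through an unspecified ``refinement of the expansion of $\nabla_x|\Omega^c\cap B(x,r)|$'', and no smoothness of $\Gamma$ is ever established; your last paragraph only exhibits $\Gamma$ as the image of $\gamma$ under the normal map, which cannot substitute for (2) unless $\gamma$ is already known to be smooth. But your route to the regularity of $\gamma$ is also flawed: an interior tangent ball of radius $R$ at the nearest points does \emph{not} make $\partial\Omega$ a $C^{1,1}$ graph near $\gamma$ (a one-sided tangent ball at a subset of boundary points gives no two-sided regularity, and a priori $\partial\Omega$ is only a topological boundary with $\partial\Omega=\partial(\mathbb R^N\setminus\overline\Omega)$). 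Consequently the asymptotic formula with coefficient $\bigl[\prod_j(1-R\kappa_j)\bigr]^{-1/2}$ --- which is \cite[Theorem 5.5]{MPS2006tams} and needs $C^2$ smoothness of $\partial\Omega$ near the touching point --- is not available at that stage, nor are the a.e.\ Monge--Amp\`ere relation and the Evans--Krylov bootstrap built on it. The paper makes regularity flow in the opposite direction: once $\Gamma$ is smooth, one finds a point where $\hat\kappa_j<1/R$ strictly, the map $x\mapsto x+R\hat\nu(x)$ is there a local diffeomorphism onto a smooth piece $\gamma_0\subset\partial\Omega$, Theorem 5.5 is applied on $\gamma_0$ away from self-contact points, the resulting relation $\prod_j(1-R\hat\kappa_j)=c^{-1}$ is propagated over all of $\Gamma$ by an open--closed argument, and analyticity of $\Gamma$ (hence of $\gamma$) comes from this Monge--Amp\`ere equation written on $\Gamma$.

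A second substantive omission is the possibility of self-contact of $\partial\Omega$: since only $\partial\Omega=\partial(\mathbb R^N\setminus\overline\Omega)$ is assumed, the putative component $\gamma$ could a priori touch itself, the normal correspondence could fail to be injective, and $\gamma$ could fail to be embedded. Excluding this requires the dedicated argument leading to \eqref{additional monge ampere}, where the expansion at a self-contact point would receive contributions from both sheets and contradict the constancy of $\prod_j(1-R\kappa_j)$; the paper's Remark~2.2 stresses that this is precisely the delicate point of the lemma (it corrects an earlier proof). Your open-and-closed sketch for the surjectivity/injectivity of the normal map does not address this, so even granting your regularity claims the proof of (3) would be incomplete.
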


\noindent
\begin{proof} Since $\Omega$ is uniformly dense in $\Gamma$, there exists $r_{0} \in (\mbox{\rm dist}(\Gamma, \partial\Omega), +\infty]$ such that for every fixed $r \in (0, r_{0})$ the function $x \mapsto |\Omega^{c}\cap B(x,r)|$ is also constant on $\Gamma$. Therefore, property~(1) holds for some $R > 0$, since  $\partial\Omega = \partial\left(\mathbb R^N\setminus \overline{\Omega}\right)$ and $\Gamma \cap \partial\Omega = \varnothing$.  Moreover  Eq.~\eqref{Gamma is the nearest} yields that $R =\mbox{\rm dist}(\Gamma,\partial\Omega) = \mbox{\rm dist}(\partial D,\partial\Omega)$. 

First, let us show that $\Gamma$ is a $C^\infty$ hypersurface. Take an arbitrary function $\eta \in C^\infty_0(0, r_0)$ and set $\psi(x) = \eta(|x|)$ for $x \in \mathbb R^N$. Then $\psi \in C_0^\infty(\mathbb R^N)$, $\mbox{\rm supp }(\psi) \subset B(0, r_0)$ and the convolution $\psi \star  {\mathcal X}_{\Omega^c}$ belongs to $C^\infty(\mathbb R^N)$. Moreover we have that
$$
\psi \star  {\mathcal X}_{\Omega^c}(x) = \int_{\Omega^c\cap B(x,r_0)} \eta(|x-y|)\ dy= \int_0^{r_0}\eta(r) |\Omega^c\cap\partial B(x,r)|\ dr,
$$
where $|\Omega^c\cap\partial B(x,r)|$ denotes the $(N-1)$-dimensional Hausdorff measure of the set $\Omega^c\cap\partial B(x,r)$. 

The function $\psi \star  {\mathcal X}_{\Omega^c}$ is constant on $\Gamma$.  In fact, if we fix two points $p, q \in \Gamma$ arbitrarily, being $\Omega$ uniformly dense in $\Gamma$, we have that
$$
|\Omega^{c}\cap B(p,r)| = |\Omega^{c}\cap B(q,r)|\ \mbox{ for every } r \in (0,r_0)
$$
and hence, by differentiating with respect to $r$ both sides, that
\begin{equation}
\label{Hausdorf measure invariant}
|\Omega^c\cap\partial B(p,r)| = |\Omega^c\cap\partial B(q,r)|\ \mbox{ for almost every } r \in (0,r_0).
\end{equation}

Thus, if we show that for every $x \in \Gamma$ there exists a function $\eta \in C^\infty_0(0, r_0)$ such that
$\nabla \left(\psi \star  {\mathcal X}_{\Omega^c}\right)(x) \not = 0$, then we can conclude that $\Gamma$ is a $C^\infty$ hypersurface, by the implicit function theorem. Suppose that there exists a point $x_0 \in \Gamma$ such that
$$
\nabla \left(\psi \star  {\mathcal X}_{\Omega^c}\right)(x_0) = 0\ \mbox{ for every }  \eta \in C^\infty_0(0, r_0);
$$
it follows that
 $$
 \int_0^{r_0} \Biggl[\int_{\Omega^c\cap\partial B(x_0,r)}(x_0 - y)\,dS_y\Biggr]\frac{\eta^\prime(r)}{r}\, dr = 0\ \mbox{ for every }  \eta \in C^\infty_0(0, r_0),
 $$
where $dS_y$ denotes the area element of the sphere $\partial B(x_0,r)$. This, together with the fact that $\Omega^c\cap\partial B(x_0,r) = \varnothing$ for $0 < r < R$, gives that the surface integral in the brackets is zero for almost every $r \in (0, r_0)$,
and hence that
$$
\int_{\Omega^c\cap B(x_0,r)}(x_0 - y)\ dy = 0 \ \mbox{ for every } r \in (0, r_0)
$$
--- that is, $x_0$ must be the center of mass of $\Omega^c\cap B(x_0,r)$ for every $r \in (0, r_0)$. 

By the same argument as in \cite[the proof of Theorem 2.5]{MPS2006tams}, the interior cone condition for $\Gamma$ gives a contradiction. Thus, $\Gamma$ is a $C^\infty$ hypersurface embedded in $\mathbb R^N$.\footnote{It is clear
that the following assumption would suffice:
$\Gamma$ is an immersed topological surface satisfying the  {\it center of mass condition}, as defined in the previous footnote.} 

Now,  since $\Gamma$ is a connected component of $\partial D$, we notice that, in view of  \eqref{Gamma is the nearest}, property~(1) and the smoothness of $\Gamma$ imply that
\begin{equation}
\label{one-to-one}
\mbox{ for each $x \in \Gamma\ $ there exists a unique $\xi \in \partial\Omega$ satisfying $x \in \partial B(\xi,R)$},
\end{equation}
since $\xi - x$ must be parallel to $\hat\nu(x)$.
Note that $\xi = x + R\hat\nu(x)$, and in view of property (1) and  (\ref{one-to-one}), comparing  the principal curvatures at $x$ of $\Gamma$ with those of the sphere $\partial B(\xi,R)$ yields that
\begin{equation}
\label{curvature-weakinequalityGamma}
\hat\kappa_j \le \frac 1R\ \mbox{ on } \Gamma,  \mbox{ for every } \ j=1,\dots, N-1.
\end{equation}
\par
Since $\Gamma$ is a connected component of $\partial D$, then $\Gamma$ is oriented and divides $\mathbb R^N$ into two domains. Let $E$ be the one of them which does not intersect $D$.  By property~(1) and \eqref{Gamma is the nearest},  $E \cap \left(\mathbb R^N \setminus \overline{\Omega}\right)$ contains a point, say,  $z.$ Set  $R_0= \mbox{ dist}(z, \Gamma) $. Then $R_0 > R$ and there exists a point $p_0 \in \Gamma$ such that $R_0 = |z-p_0|.$ Comparing the principal curvatures at $p_0$ of $\Gamma$ with those of the sphere $\partial B(z,R_0),$ yields that $\displaystyle \hat\kappa_j(p_0) \le \frac 1{R_0} < \frac 1R$ for every $j = 1, \dots, N-1$. By continuity, there exists a small $\delta_0 > 0$ such that  
\begin{equation}
\label{curvature-strictinequalityGamma}
\hat\kappa_j(x) < \frac 1R\quad\mbox{ for every }\  x \in \Gamma\cap \overline{B(p_0,\delta_0)}
\ \mbox{ and every } \ j=1,\dots, N-1,
\end{equation}
and the mapping $\Gamma\cap B(p_0,\delta_0) \ni x \mapsto \xi(x) \equiv x + R\,\hat\nu(x) \in \partial\Omega$ is a diffeomorphism onto its image $\gamma_{0}$ given by
$$
\gamma_{0}= \xi\left(\Gamma\cap B(p_0,\delta_0)\right) \left(\subset \partial\Omega \right).
$$
Hence $\gamma_{0}$ is a portion of a $C^\infty$  hypersurface, since $\Gamma$ is a $C^\infty$ hypersurface. 
\par
Notice that
the principal curvatures $\kappa_1, \cdots, \kappa_{N-1}$ of $\gamma_{0}$ satisfy
$$
-\kappa_j(\xi(x))= \frac {\hat\kappa_j(x)}{1-R\hat\kappa_j(x)} \ \mbox{ for every }  
x \in \Gamma\cap B(p_0,\delta_0)\mbox{ and every }\ j = 1, \dots, N-1.
$$
Therefore, since $1-R\kappa_j(\xi(x)) = 1/(1-R\hat\kappa_j(x))$, we see that (\ref{curvature-strictinequalityGamma}) is equivalent to
\begin{equation}
\label{curvature-strictinequalityOmega}
\kappa_j < \frac 1R \ \mbox{ on }\  \gamma_{0}  \ \mbox{ for every } \ j=1,\dots, N-1.
\end{equation}

Here, notice that $\partial\Omega$ may have a point of selfcontact, since we only assume that $\partial\Omega = \partial\left(\mathbb R^N\setminus \overline{\Omega}\right)$. For this reason set
$$
\gamma_{0}^{*} = \{ \xi \in \gamma_{0}\ :\ \xi \mbox{ is a point of selfcontact of } \partial\Omega \}.
$$
Then $\gamma_{0}^{*}$ does not contain any interior points in $\gamma_{0}$, 
since  $\gamma_{0}$ is a portion of a $C^\infty$  hypersurface and $\partial\Omega = \partial\left(\mathbb R^N\setminus \overline{\Omega}\right)$. 

Let $P, Q \in \gamma_{0}\setminus \gamma_{0}^{*}$ be any two points and set $\xi(p) = P$, $\xi(q) = Q$ for $p, q \in 
\Gamma\cap B(p_0,\delta_0)$. Then, it follows from \eqref{Hausdorf measure invariant} and  
 the smoothness of $\gamma_{0}$ that there exists a small number $\varepsilon > 0$  satisfying
$$
|\Omega^c\cap\partial B(p,r)| = |\Omega^c\cap\partial B(q,r)|\ \mbox{ for  every } r \in (R, R+\varepsilon). 
$$
Hence we can use \cite[Theorem 5.5]{MPS2006tams}, with $\mathbb R^N \setminus \overline{\Omega}$ in place of $\Omega$, to get
\begin{equation}
\label{conclusion by asymptotics}
\left[\prod_{j=1}^{N-1}\left(1-R\kappa_j(P)\right)\right]^{-\frac12} = \left[\prod_{j=1}^{N-1}\left(1-R\kappa_j(Q)\right)\right]^{-\frac12}.
\end{equation}
Therefore, since $\gamma_{0}^{*}$ does not contain any interior points in $\gamma_{0}$, by continuity we conclude that
\begin{equation}
\label{monge-ampere-formula-local}
\prod_{j=1}^{N-1}\left(1-R\,\kappa_j\right) = c\ \mbox{ on }\ \gamma_{0}
\end{equation}
where, for instance, $c$ is the (positive) value of the right-hand side of \eqref{monge-ampere-formula-local} at the point $P_0=\xi(p_0) \in \gamma_{0}$.
Since $1-R\kappa_j(\xi(x)) = 1/(1-R\hat\kappa_j(x))$, we see that
$$
\prod_{j=1}^{N-1}\left[1-R\,\hat\kappa_j(x)\right]= c^{-1}\ \mbox{ for every }\ x \in \Gamma\cap B(p_0,\delta_0).
$$

Define a set $J \subset \Gamma$ by
$$
J = \Biggl\{ p \in \Gamma : \max_{1\le j \le N-1} \hat\kappa_j(p) < \frac 1R \mbox{ and } \prod_{j=1}^{N-1}\left[1-R\,\hat\kappa_j(p)\right] = c^{-1} \Biggr\}.
$$
By the previous argument we notice that $J$ is a relatively open subset of $\Gamma$ and $J \not = \varnothing$.  Moreover, $J$ is a relatively closed subset of $\Gamma$.
Indeed, for any sequence of points $p_k\in  J$ converging to some $p \in \Gamma$ as $k \to \infty$, 
in the limit we would get that
\begin{equation}
\label{monge-ampere type eq}
\max_{1\le j \le N-1} \hat\kappa_j(p) \leq \frac 1R\ \mbox{ and } \prod_{j=1}^{N-1}\left[1-R\,\hat\kappa_j(p)\right] = c^{-1}  (> 0),
\end{equation}
and the second equality implies that the first inequality must be strict; thus, $J$ is closed. Since $\Gamma$ is connected, we conclude that $J = \Gamma$.  Also, the regularity theory for nonlinear elliptic equations implies that 
$\Gamma$ is a real analytic hypersurface, since $\Gamma$ is locally a graph of a function which satisfies a Monge-Amp\`ere type equation coming from the second equality of \eqref{monge-ampere type eq}. 
Let us set
\begin{equation}
\label{definition of gamma}
\gamma = \{ \xi(x) \in \mathbb R^N : x \in \Gamma \}.
\end{equation}
Then $\gamma$ does not have any points of selfcontact, that is, the mapping $\xi: \Gamma \to \partial\Omega$ is injective. Indeed, suppose that there exists a point of selfcontact $P_{*} \in \gamma$, that is, there exist two open portions $\gamma_{+}, \gamma_{-}$ of the manifold $\gamma$ containing a common point $P_{*}$. Hence we have two points $p_{+}, p_{-} \in \Gamma$ and two inward normal vectors $\nu^{+}$ and $\nu^{-}$ at $P_{*} \in \gamma \left(\subset \partial\Omega\right)$ satisfying
\begin{equation}
\label{opposite position}
\nu^{+}+ \nu^{-}= 0,\ p_{+}= P_{*} + R\,\nu^{+},\ \mbox{ and }\ p_{-}= P_{*} + R\,\nu^{-}.
\end{equation}
Denote by $\kappa^{\pm}_1, \cdots, \kappa^{\pm}_{N-1}$ the principal curvatures of $\gamma_{\pm}$  at $P_{*} \in \gamma$ with respect to the inward unit normal vectors $\nu^{\pm}$ to $\partial\Omega$, respectively.  Then  we observe that 
\begin{equation}
\label{strict inequalities from both sides}
-\frac 1R < \kappa^{\pm}_j < \frac 1R  \ \mbox{ for every } \ j=1,\dots, N-1.\,
\end{equation}
since $J = \Gamma$ and $P_{*}$ is  the point of selfcontact of $\gamma$. 
As before, take a point $Q \in \gamma$  which is not a point of selfcontact and set $\xi(q) = Q$ for $q \in \Gamma$.
Then it follows from \eqref{Hausdorf measure invariant} and  
 the smoothness of $\gamma$ that there exists a small number $\varepsilon_{0} > 0$  satisfying
$$
|\Omega^c\cap\partial B(p_{+},r)| = |\Omega^c\cap\partial B(q,r)|\ \mbox{ for  every } r \in (R, R+\varepsilon_{0}). 
$$
Hence, in view of \eqref{opposite position} and \eqref{strict inequalities from both sides}, we can use \cite[Theorem 5.5]{MPS2006tams} again to get
\begin{equation}
\label{additional monge ampere}
\Biggl[\prod_{j=1}^{N-1}\left(1-R\,\kappa^{+}_j\right)\Biggr]^{-\frac12} - \Biggl[\prod_{j=1}^{N-1}\left(1+R\,\kappa^{-}_j\right)\Biggr]^{-\frac12}= \Biggl[\prod_{j=1}^{N-1}\left(1-R\,\kappa_j(Q)\right)\Biggr]^{-\frac12}.
\end{equation}
This is a contradiction, since \eqref{monge-ampere-formula-local} in which $\gamma_0$ is replaced by $\gamma$ holds true from the fact that $J = \Gamma$.
\par
Therefore, since $\hat\kappa_j< 1/R$ on $\Gamma$ for every $j=1,\dots, N-1$, we see that the injective mapping $\Gamma \ni x \mapsto \xi(x) \equiv x + R\,\hat\nu(x) \in \gamma$ is a real analytic diffeomorphism because of the analyticity of $\Gamma$,  and $\gamma$ is a real analytic hypersurface embedded in $\mathbb R^N$ which is a connected component of
$\partial\Omega$. Since the mapping: $\gamma \ni \xi \mapsto x(\xi) \equiv \xi + R\,\nu(\xi) \in \Gamma$ is the inverse mapping of the previous diffeomorphism, property~(3) holds.  Both properties~(4) and~(5) follow from the fact that $J = \Gamma$. 
The proof is complete.
\end{proof}


\begin{remark} 
\label{rmk:self-contact}
{\rm In \cite[Lemma 3.1 and its proof, pp. 2026--2029]{MS2013mmas}, the first and third authors of this paper did not take care of the case in which 
$\gamma$ has points of self-contact. 
Thus, Lemma \ref{le:regularity} completes the proof of {\rm\cite[Lemma 3.1, p. 2026]{MS2013mmas}} for the case of the Cauchy problem. 
\par
Still, the argument we used to obtain \eqref{additional monge ampere} does not work 
in the case of the initial-boundary value problem for the heat equation with boundary value $1$ and initial value $0$ --- the {\it matzoh ball soup} setting considered initially in \cite{MS2002}.
Hence, statement {\rm $3$} of {\rm\cite[Lemma 3.1, p. 2026]{MS2013mmas}} should be corrected in such a way that $\gamma$ is an {\it immersed} hypersurface in $\mathbb R^{N}$. Then  $\gamma$ may have points of self-contact. 
\par
On the contrary, the reflection argument due to Alexandrov works for a bounded domain $\Omega$,
even if $\partial\Omega$ contains points of self-contact  (see \cite{Alex1962annali}). So the statement of Remark right after \cite[Lemma 3.1, p. 2026]{MS2013mmas} still holds true.
}
\end{remark}

The following proposition follows directly from Lemma {\rm \ref{le:regularity}} 
and is one of the key ingredients in the proof of Theorem \ref{th:uniformly dense}. We preliminarily notice that,
under the assumptions of Theorem \ref{th:uniformly dense}, there exists $R > 0$ such that 
\begin{equation}
\label{constant distance to the boundary}
\mbox{ dist}(x, \partial\Omega) = R \ \mbox{ for every } x \in \Gamma,
\end{equation}
since $\partial\Omega = \partial\left(\mathbb R^3 \setminus \overline{\Omega}\right)$ and $\Omega$ is uniformly dense in $\Gamma$.
Also,  since $\partial\Omega$ is connected, 
Lemma {\rm \ref{le:regularity}} and \eqref{constant distance to the boundary}  imply that
$$
\gamma = \partial\Omega,\ \mbox{ and } \mbox{ $\partial\Omega$ and $\Gamma$ are parallel surfaces at distance $R > 0$.}
$$
Furthermore, both $\partial\Omega$ and $\Gamma$ are embedded in $\mathbb R^3$.

\begin{proposition}
\label{prop:constant mean curvature} Under the assumptions of {\rm Theorem  \ref{th:uniformly dense}}, set $\rho_* = R/(1+\sqrt{c})$, where $c > 0$ is the number in \eqref{monge-ampere} in {\rm Lemma \ref{le:regularity}}, and
$$
\Gamma_*= \{ x \in \Omega\ :\  \mbox{\rm dist}(x,\partial\Omega) = \rho_* \}.
$$

Then, $\Gamma_*$ is a real analytic hypersurface parallel to $\partial\Omega$,  $\Gamma_*$ is embedded in $\mathbb R^3$, and  $\Gamma_*$ has a constant mean curvature 
$$
H^* = \frac {1-c}{2R\sqrt{c}},
$$ 
where the normal to $\Gamma_*$ is chosen to point in the same direction as the inward normal to $\partial\Omega$.  In particular, $\Gamma_*$ is a properly embedded surface with
constant mean curvature (or a properly embedded minimal surface when $c=1$) in $\mathbb R^3$, and hence it is complete.
\end{proposition}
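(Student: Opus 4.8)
The plan is to realize $\Gamma_*$ as a one-sided parallel surface of $\partial\Omega$ and to read off all three assertions (analyticity, embeddedness, constant mean curvature) from Lemma~\ref{le:regularity}. By the discussion preceding the statement, $\partial\Omega$ is a connected, real analytic, embedded surface, and by \eqref{bounds of curvatures}--\eqref{monge-ampere} its principal curvatures $\kappa_1,\kappa_2$ with respect to the inward normal $\nu$ satisfy $\kappa_j<1/R$ and $(1-R\kappa_1)(1-R\kappa_2)=c$. For $0\le\rho\le R$ I would consider the map $\Phi_\rho:\partial\Omega\to\mathbb R^3$, $\Phi_\rho(\xi)=\xi+\rho\,\nu(\xi)$, and set $\Gamma_\rho=\Phi_\rho(\partial\Omega)$; a short check shows $\Gamma_\rho=\{x\in\Omega:\mbox{\rm dist}(x,\partial\Omega)=\rho\}$ for $\rho<R$ (a point of the right-hand side is joined to its unique foot point on $\partial\Omega$ by an inward normal segment), so $\Gamma_{\rho_*}=\Gamma_*$. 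Since $\kappa_j<1/R$, for $\rho<R$ the differential of $\Phi_\rho$ has the positive eigenvalues $1-\rho\kappa_j$, so $\Phi_\rho$ is a real analytic immersion; moreover $\nu(\xi)$ is also the unit normal of $\Gamma_\rho$ at $\Phi_\rho(\xi)$ (parallel surfaces have parallel normals) and the principal curvatures of $\Gamma_\rho$ there are $\kappa_j/(1-\rho\kappa_j)$. In particular all of this applies at $\rho=\rho_*<R$, so $\Gamma_*$ is a real analytic surface parallel to $\partial\Omega$.

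The constant mean curvature assertion is then an algebraic identity. Writing $\mu_j=1-R\kappa_j$, so that $\mu_1\mu_2=c$ while $s:=\mu_1+\mu_2$ varies along $\partial\Omega$, one computes $2H_\rho=\frac{\kappa_1}{1-\rho\kappa_1}+\frac{\kappa_2}{1-\rho\kappa_2}=\frac{2(R-\rho)-2\rho c+(2\rho-R)s}{(R-\rho)^2+\rho(R-\rho)s+\rho^2 c}$, a Möbius function of $s$. This is independent of $s$ precisely when numerator and denominator are proportional, which simplifies to $(R-\rho)^2=c\rho^2$, i.e. $\rho=\rho_*=R/(1+\sqrt c)$; substituting back (using $R-\rho_*=\sqrt c\,\rho_*$) gives the common value $H_{\rho_*}=\tfrac12\,\frac{2\rho_*-R}{\rho_*(R-\rho_*)}=\frac{1-c}{2R\sqrt c}=H^*$, with the stated normalization of the normal. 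When $c=1$ this gives $H^*=0$.

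It remains to show that $\Gamma_*$ is \emph{globally} embedded, and that it is properly embedded and complete; this is the delicate point. First I would record the interior ball property: for every $x\in\Gamma$ one has $B(x,R)\subset\Omega$, since $x\in\Omega$ and $\mbox{\rm dist}(x,\partial\Omega)=R$ by Lemma~\ref{le:regularity}(1). Consequently, for $\xi\in\partial\Omega$, $x=\xi+R\nu(\xi)\in\Gamma$ and $0<t\le R$, the ball $B(\xi+t\nu(\xi),t)$ lies in $B(x,R)\subset\Omega$, hence $\mbox{\rm dist}(\xi+t\nu(\xi),\partial\Omega)=t$; and if some $\eta\in\partial\Omega$ also realized this distance, then $|\eta-x|=R$ (else $\eta\in B(x,R)\subset\Omega$), which forces $\xi+t\nu(\xi)$ to lie on the segment $[\eta,x]$ as well as on $[\xi,x]$, and together with the uniqueness of the foot point of $x$ (equation \eqref{one-to-one}) this yields $\eta=\xi$. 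Thus the normal exponential map $E(\xi,t)=\xi+t\nu(\xi)$ is injective on $\partial\Omega\times[0,R]$; in particular $\Phi_{\rho_*}$ is an injective immersion. Finally $\Gamma_*=\{x\in\Omega:\mbox{\rm dist}(x,\partial\Omega)=\rho_*\}$ is closed in $\mathbb R^3$ (the distance to the closed set $\partial\Omega$ is continuous and $\overline\Omega=\Omega\cup\partial\Omega$), and $\Phi_{\rho_*}$ displaces every point by exactly $\rho_*$, so preimages of bounded sets are bounded; hence $\Phi_{\rho_*}$ is proper. A proper injective immersion is an embedding, and a closed embedded surface in $\mathbb R^3$ is complete, which finishes the proof; for $c=1$ it is a properly embedded minimal surface.

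The main obstacle is precisely the embeddedness: the curvature bound $\kappa_j<1/R$ by itself only makes $\Phi_{\rho_*}$ a \emph{local} embedding, and global injectivity has to be extracted from the interior ball property $B(x,R)\subset\Omega$ for $x\in\Gamma$ combined with the uniqueness of foot points from Lemma~\ref{le:regularity}; properness of the distance level set (hence the upgrade to a proper embedding, and completeness) then follows from closedness. The curvature computation and the analyticity are routine once the parallel-surface structure is in place.
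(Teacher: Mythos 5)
Your proposal is correct and follows essentially the same route as the paper: realize $\Gamma_*$ as the parallel surface $\xi\mapsto\xi+\rho_*\nu(\xi)$, feed the parallel-surface curvature relation into \eqref{monge-ampere} to get $H^*=\frac{1-c}{2R\sqrt{c}}$, and conclude proper embeddedness and completeness from closedness of the distance level set. The only differences are presentational: you derive $\rho_*$ as the unique value making the mean curvature independent of $\kappa_1+\kappa_2$ instead of substituting it directly, and you spell out the global injectivity of the parallel map (interior ball plus uniqueness of foot points), which the paper subsumes in Lemma \ref{le:regularity} and its diffeomorphism statement.
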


\noindent
\begin{proof} Since both $\partial\Omega$ and $\Gamma$ are embedded in $\mathbb R^3$ and the mapping 
$\partial\Omega \ni \xi \mapsto \xi + \rho_*\nu(\xi) \in \Gamma_*$ is a diffeomorphism because  $0 < \rho_* < R$, we see that $\Gamma_*$ is also a real analytic hypersurface embedded in $\mathbb R^3$ and  $\Gamma_*$ is  parallel to both $\partial\Omega$ and $\Gamma$.

For $0 < \rho < R$,  we denote by  $\kappa^\rho_1$ and $\kappa^\rho_2$ the principal curvatures of $\Gamma_\rho$ at $x = \xi + \rho\,\nu(\xi)\in \Gamma_\rho$ with respect to  the unit normal vector to $\Gamma_*$ with the same direction as $\nu(\xi)$. Then
\begin{equation}
\label{the relation of principal curvatures}
\kappa_j(\xi)= \frac {\kappa^\rho_j(x)}{1+\rho\kappa^\rho_j(x)}\ \ (j = 1, 2)\  \mbox{ for every } \xi \in \partial\Omega.
\end{equation}
Substituting these in Eq.~\eqref{monge-ampere} yields
$$
c(1+\rho\kappa^\rho_1)(1+\rho\kappa^\rho_2) = [1+(\rho-R)\kappa^\rho_1][1+(\rho-R)\kappa^\rho_2].
$$
Hence, by letting $\rho = \rho_*$, with $\rho_*=R/(1+\sqrt{c})$, and $\Gamma_* = \Gamma_{\rho_*}$,  we see that 
$$
H^*= \frac {\kappa^{\rho_*}_1+\kappa^{\rho_*}_2}2= \frac {1-c}{2R\sqrt{c}}. 
$$ 

Let us see that $\Gamma_*$ is  {\it properly} embedded in $\mathbb R^3$.  Observe that $\Gamma_* =  \{ x \in \overline{\Omega}\ :\  \mbox{\rm dist}(x,\partial\Omega) = \rho_* \}$ where
$\Omega$ is replaced by $\overline{\Omega}$.  Since the distance function $\mbox{\rm dist}(x,\partial\Omega)$ is continuous on $\mathbb R^3$, we see that $\Gamma_*$ is closed in $\mathbb R^3$.
Let $K$ be an arbitrary compact subset of $\mathbb R^3$. Then $\Gamma_*\cap K$ is also compact in $\mathbb R^3$. Let $\{ p_n\}$ be an arbitrary sequence in $\Gamma_*\cap K$.
By the Bolzano -Weierstra\ss \  theorem,  $\{ p_n\}$ has a convergent subsequence in $\mathbb R^3$. Let $p\in\Gamma_*\cap K$ be its limit point. Since $ p \in \Gamma_*$, the smoothness of $\Gamma_*$  yields that there exists $\delta > 0$ such that $B(p,\delta) \cap \Gamma_*$ is represented by a real analytic graph over the tangent plane of $\Gamma_*$ at $p$. This shows that the above subsequence also converges to $p$ with respect to the induced metric of $\Gamma_*$, which means that $\Gamma_*$ is  properly embedded in $\mathbb R^3$. (Similarly, both $\partial\Omega$ and $\Gamma$ are properly embedded in $\mathbb R^3$.) 
\end{proof}


\setcounter{equation}{0}
\setcounter{theorem}{0}

\section{Asymptotic expansions for $\sigma(x,r)$}
\label{section3}

The second key ingredient in the proof of Theorem~\ref{th:uniformly dense} is Proposition \ref{prop:asymptotics}  below, in which we prove an asymptotic formula for $4\pi\cdot\sigma(x, R+s) (= |\partial B(x, R+s) \cap \Omega^c|/(R+s)^2) $ as $s\to +0$, where $R > 0$ is given in \eqref{constant distance to the boundary} and $\sigma(x,r)$ is defined in \eqref{surface-measure-in-uniformly-dense}. 
The ensuing Proposition \ref{prop:asymptotics-2} will then clarify the geometric meaning of the function $g$ in \eqref{formula-asymptotics}. 
In Proposition \ref{prop:asymptotics}, we choose a principal coordinate system $z=(z_1,z_2,z_3)$ with the origin at $\xi\in\partial\Omega$ and such that, in some neighborhood of $\xi$, $ \partial\Omega$ is represented by the graph $z_3 = \varphi(z_1,z_2)$, with the $z_3$ coordinate axis lying in the direction $-\nu(\xi)$ and
$$
\varphi(z_1,z_2) = -\frac 12 \kappa_1(\xi) z_1^2 -\frac 12 \kappa_2(\xi) z_2^2 + O\left((z_1^2+z_2^2)^{\frac 32}\right) \ \mbox{ as } \sqrt{z_1^2+z_2^2} \to 0.
$$
Hereafter, we abbreviate the partial derivatives of $\varphi$ with respect to $z_1$ and $z_2$ by subscripts:
$$
\varphi_1= \frac {\partial \varphi}{\partial z_1},\ \varphi_{11}= \frac {\partial^2\varphi}{\partial z_1^2},\   \varphi_{112} = \frac {\partial^3\varphi}{\partial z_2\partial z_1^2}\mbox{ and so on.}
$$


\begin{proposition}
\label{prop:asymptotics} 
Let $\xi \in \partial\Omega$ and set $x = \xi + R\nu(\xi) \in \Gamma$. Under the assumptions of {\rm Theorem  \ref{th:uniformly dense}}, we have:
\begin{equation}
\label{formula-asymptotics}
\frac {|\partial B(x, R+s) \cap \Omega^c|}{(R+s)^2} = \frac {2\pi}{\sqrt{c}} \,\frac{s}{R} + \frac \pi{8\, c \sqrt{c}}\,[h(K) + g]\,\left(\frac{s}{R}\right)^2 + O\left(s^{\frac 52}\right)\ \mbox{ as } s \downarrow 0.
\end{equation}
Here, $K = \kappa_1(\xi)\kappa_2(\xi)$ is the Gauss curvature of the surface $\partial\Omega$ at the point $\xi$ and $h$ is a $2$-degree polynomial:
\begin{equation}
\label{defh}
h(t) = (R^2t+c-1)^2 -4 c\, (c+3).
\end{equation}
\par
Moreover, $g\le 0$ on $\partial\Omega$ and $g=0$ if and only if the third-order derivatives $\varphi_{111}$ and $\varphi_{222}$ of the function $\varphi$ defined above vanish at the origin. 
\end{proposition}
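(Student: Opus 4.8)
The plan is to compute directly the area of the spherical cap $\partial B(x,R+s)\cap\Omega^c$ by expanding everything in powers of $s$ around the contact point. Since $\mbox{dist}(x,\partial\Omega)=R$ and $\xi$ is the unique nearest point (by \eqref{one-to-one}), for small $s>0$ the set $\Omega^c\cap\partial B(x,R+s)$ is a small geodesic-like disk on the sphere centered near the point $\xi-$(something) $\cdot\nu(\xi)$, and its boundary is the curve where $\partial B(x,R+s)$ meets $\partial\Omega$. First I would set up the principal coordinate system $z=(z_1,z_2,z_3)$ at $\xi$ as in the statement, write $\partial\Omega$ as the graph $z_3=\varphi(z_1,z_2)$ with the Taylor expansion carried to \emph{third} order (this is the new feature beyond \cite{MPS2006tams}, where only the quadratic part mattered for the first coefficient), and parametrize $\partial B(x,R+s)$ near the cap. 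The sphere has center $x=(0,0,R)$ in these coordinates (since $z_3$ points along $-\nu(\xi)$, so $x=\xi+R\nu(\xi)$ sits at height $R$ below... careful: $x$ is at $(0,0,-R)$ or $(0,0,R)$ depending on sign convention — I would fix the convention so that $x=(0,0,-R)$ and $\partial\Omega$ curves away as $z_3=\varphi$), and I solve for the intersection curve by substituting $z_3=\varphi(z_1,z_2)$ into $|z-x|^2=(R+s)^2$.

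The key steps, in order: (i) Solve the equation $z_1^2+z_2^2+(\varphi(z_1,z_2)+R)^2=(R+s)^2$ for the intersection curve in polar coordinates $z_1=\tau\cos\theta$, $z_2=\tau\sin\theta$; to leading orders this gives $\tau=\tau(\theta,s)$ with $\tau\sim c_1(\theta)\sqrt{s}+c_2(\theta)s+\cdots$, where $c_1$ depends only on $\kappa_1,\kappa_2$ and $c_2$ picks up the third-order derivatives $\varphi_{111},\varphi_{112},\varphi_{122},\varphi_{222}$. (ii) Express the spherical cap area as a surface integral over the region on the sphere cut out by this curve; equivalently, compute the area of the region in the $(z_1,z_2)$-disk $\{\tau<\tau(\theta,s)\}$ weighted by the appropriate Jacobian factor coming from the graph-over-sphere correspondence, then add the curvature correction for the cap itself. (iii) Integrate in $\theta$ over $[0,2\pi]$; the $\sqrt{s}$-term in $\tau$ yields, after squaring and integrating, the leading $\frac{2\pi}{\sqrt c}\frac sR$ term (using $c=(1-R\kappa_1)(1-R\kappa_2)$ from \eqref{monge-ampere}); the next order yields the $(s/R)^2$ coefficient, which splits into a part symmetric under $\theta\mapsto\theta+\pi$ (the polynomial $h(K)$ in the Gauss curvature, since cross terms in $\kappa_1,\kappa_2$ conspire into $K$ and $c$) and a part coming from the odd third-order terms. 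The odd part, upon $\theta$-integration, is what I would call $g$. The crucial cancellations — that the $\theta$-integrals of $\varphi_{112}$ and $\varphi_{122}$ contributions vanish, while $\varphi_{111}$ and $\varphi_{222}$ survive in a definite-sign combination — are exactly the computations deferred to the Appendix; I would organize them as a sequence of elementary trigonometric definite integrals $\int_0^{2\pi}\cos^k\theta\,\sin^m\theta\,d\theta$.

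For the final sign statement: after the $\theta$-integration, $g$ should come out as a negative multiple of $\varphi_{111}^2+\varphi_{222}^2$ (possibly with a positive coefficient depending on $R,c$). Concretely I expect $g=-\lambda(R,c)\,(\varphi_{111}^2+\varphi_{222}^2)$ with $\lambda>0$, from which $g\le 0$ is immediate and $g=0\iff\varphi_{111}=\varphi_{222}=0$ at the origin. The reason only $\varphi_{111},\varphi_{222}$ enter (not $\varphi_{112},\varphi_{122}$) is that, in a principal coordinate system, the mixed third derivatives enter the expansion of $\tau(\theta,s)$ through terms whose $\theta$-angular integrals against the weight produced by the leading-order cap geometry vanish by parity, whereas $\varphi_{111}\cos^3\theta+\varphi_{222}\sin^3\theta$ does not integrate to zero against that weight — instead it contributes a manifestly negative quadratic form after one uses the Cauchy–Schwarz-type identity $\int_0^{2\pi}(\varphi_{111}\cos^3\theta+\varphi_{222}\sin^3\theta)^2\,d\theta\ge 0$ with equality iff both vanish. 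The main obstacle is the sheer bookkeeping of step (i)–(ii): one must track the expansion of $\tau(\theta,s)$ to order $s$ and the cap-area integrand to matching order without dropping terms, and one must correctly account for the difference between the flat area in the $(z_1,z_2)$-plane and the true spherical area — a discrepancy that itself contributes at order $s^2$ and feeds into $h(K)$. I would double-check the whole computation by verifying that when $\partial\Omega$ is a plane ($\kappa_1=\kappa_2=0$, all higher derivatives zero) the formula reduces to the known flat-half-space value, and when $\partial\Omega$ is a sphere or circular cylinder the third-order terms vanish identically so $g\equiv 0$, consistent with the fact that these are the extremal surfaces of the theorem.
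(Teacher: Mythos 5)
Your setup (principal coordinates at $\xi$, the graph $z_3=\varphi(z_1,z_2)$, the intersection curve expanded in powers of $\sqrt{s}$, and angular integration) is essentially the paper's skeleton, but the proposal has a genuine gap at exactly the point where the real work lies. First, stopping the Taylor expansion of $\varphi$ at third order is not enough: the coefficient of $s^2$ in the cap area genuinely contains the fourth-order part $P_4$ of $\varphi$ (see \eqref{coefficient in s2}, where the term $R^4b_1^4P_4(v)$ appears), so a priori the second coefficient involves $\varphi_{1111},\varphi_{1122},\varphi_{2222}$ as well as all four third derivatives, and it is simply not of the form $h(K)+g$ with $g$ built from $\varphi_{111},\varphi_{222}$ alone. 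Second, your mechanism for discarding $\varphi_{112}$ and $\varphi_{122}$ — parity of the $\theta$-integrals — fails: $P_3$ enters the $s^2$ coefficient through its \emph{square}, and $[P_3(v)]^2$ contributes the even terms $(\varphi_{112})^2\cos^4\theta\sin^2\theta$, $(\varphi_{122})^2\cos^2\theta\sin^4\theta$ and the cross products $\varphi_{111}\varphi_{122}$, $\varphi_{112}\varphi_{222}$, none of which integrate to zero (parity only kills the $s^{3/2}$ coefficient, where $P_3$ appears linearly).

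What closes both gaps in the paper is an ingredient absent from your plan: the uniform-density hypothesis is used not only to fix the pointwise value $c=(1-R\kappa_1)(1-R\kappa_2)$, but through Lemma \ref{le:regularity}(5) it gives $\prod_j(1-R\kappa_j)\equiv c$ on all of $\partial\Omega$, i.e.\ the Monge--Amp\`ere-type PDE \eqref{monge-ampere graph eq} satisfied by $\varphi$ in a neighborhood of the origin. Differentiating this identity once yields $\sigma_2\varphi_{111}+\sigma_1\varphi_{122}=0$ and $\sigma_1\varphi_{222}+\sigma_2\varphi_{112}=0$ (so the mixed third derivatives are slaved to $\varphi_{111},\varphi_{222}$), and differentiating twice expresses the needed combinations of fourth derivatives in terms of curvatures and third derivatives (Lemma \ref{le:phiderivatives}); only after these substitutions do the angular integrals collapse to $h(K)+g$ with $g=-\tfrac43 cR^4\bigl[(1-R\kappa_1)^{-3}(\varphi_{111})^2+(1-R\kappa_2)^{-3}(\varphi_{222})^2\bigr]$, which also shows that the weights in $g$ depend on the point through $\kappa_1,\kappa_2$, not only on $R$ and $c$ as you anticipated (the sign conclusion is unaffected). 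Incidentally, the regularity needed to expand $\varphi$ to fourth order is itself a product of Lemma \ref{le:regularity} (analyticity of $\partial\Omega$); without invoking the differentiated constraint, your computation cannot produce the stated formula nor the characterization of $g=0$.
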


\vskip.3cm

The starting point of the proof of this proposition is Lemma \ref{le:Puiseux expansion}, for which we need ad hoc notations and settings, in the spirit of those introduced in \cite{MPS2006tams}.

In fact, we shall use the principal coordinate directions introduced before the statement of Proposition~\ref{prop:asymptotics} without further mention. Also, for sufficiently small $s > 0$, each point $w \in \partial B(x, R+s) \cap \Omega^c$ can be parameterized by a spherical coordinate system with the origin at $x \in \Gamma$ as
$$
w = x + (R+s)(\sin \eta\cos\theta, \sin\eta\sin\theta,\cos\eta), \ 0 \le \eta \le \eta(s,\theta),\ 0 \le \theta < 2\pi,
$$
where $\eta=\eta(s,\theta)\ (0 \le \theta \le 2\pi)$ represents the closed curve  $\partial B(x, R+s) \cap \partial\Omega$ in that system. Notice that, for sufficiently small $s > 0$, $\eta = \eta(s,\theta)$ satisfies
\begin{equation}
\label{spherical-Cartesian}
G(\eta, s, \theta) = 0\ \mbox{ for every }  0 \le \theta \le 2\pi,
\end{equation}
where the function $G=G(\eta,s,\theta)$ is given by
\begin{equation}
\label{auxiliary function for IFT}
G(\eta,s,\theta) = (R+s)\cos\eta - R - \varphi((R+s)\sin\eta\cos\theta,(R+s)\sin\eta\sin\theta). 
\end{equation}
Thus, we obtain:
\begin{equation}
\label{area wanted}
\frac  {|\partial B(x, R+s) \cap \Omega^c|}{(R+s)^2} = \int_0^{2\pi}d\theta\int_0^{\eta(s,\theta)}\sin\eta \ d\eta
=  \int_0^{2\pi}(1-\cos\eta(s,\theta)) d\theta.
\end{equation}

\vskip.3cm


\begin{lemma}
\label{le:Puiseux expansion}
There exists a sequence $\{b_j(\theta)\}_{j=1}^\infty$ such that $b_1>0$ and $\eta=\eta(s,\theta)$ is expanded as the Puiseux series in $s$:
\begin{equation}
\label{Puiseux series in s}
\eta(s,\theta) = \sum_{j=1}^\infty b_j(\theta)s^{\frac j2}\ \mbox{ for small } s \ge 0,
\end{equation}
and as $s \downarrow 0$
 \begin{eqnarray}
 \frac {|\partial B(x, R+s) \cap \Omega^c|}{(R+s)^2} &=&  \frac 12 \int_0^{2\pi}b_1^2d\theta\ s +  \int_0^{2\pi}b_1b_2d\theta\ s^{\frac 32} \nonumber
 \\
 &\quad& + \frac 12 \int_0^{2\pi}\left(b_2^2+2b_1b_3-\frac1{12}b_1^4\right)d\theta\ s^2 + O\left(s^{\frac52}\right).\label{asymptotics with b_j}
 \end{eqnarray}
 \end{lemma}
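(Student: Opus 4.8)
The plan is to establish the Puiseux expansion \eqref{Puiseux series in s} by an implicit function theorem argument applied to $G(\eta,s,\theta)=0$, and then substitute it into the identity \eqref{area wanted} to read off \eqref{asymptotics with b_j}. First I would note that, by \eqref{auxiliary function for IFT} and the expansion of $\varphi$, we have $G(0,0,\theta) = R\cos 0 - R - \varphi(0,0) = 0$ for the trivial solution $\eta=0$ at $s=0$, which is degenerate since $\partial_\eta G(0,0,\theta)=0$ as well; the vanishing comes from $\varphi$ being quadratic at the origin. So the honest statement is that $\eta(s,\theta)\to 0$ as $s\to 0$, and the correct scaling must be found. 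Writing $\eta = \sqrt{s}\,\tau$ and $\varphi(z_1,z_2) = -\tfrac12\kappa_1 z_1^2 - \tfrac12\kappa_2 z_2^2 + O(|z|^3)$, one gets $G = (R+s)(1-\tfrac12 s\tau^2 + O(s^2\tau^4)) - R + \tfrac12 s\tau^2(R+s)^2(\kappa_1\cos^2\theta+\kappa_2\sin^2\theta) + O((s\tau^2)^{3/2})$. The leading terms of order $s$ give $-\tfrac12 R\tau^2 + 1 + \tfrac12 R^2\tau^2(\kappa_1\cos^2\theta+\kappa_2\sin^2\theta) + O(\sqrt s) = 0$ after dividing by $s$; wait — one should check the constant term. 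Actually the bare $-R + (R+s)$ contributes $s$, so dividing by $s$: $1 - \tfrac12 R\tau^2 + \tfrac12 R^2\tau^2(\kappa_1\cos^2\theta + \kappa_2\sin^2\theta) + O(\sqrt s)=0$, which determines $b_1(\theta)^2 = \tau^2 = \bigl[\tfrac{R}{2}(1-R(\kappa_1\cos^2\theta+\kappa_2\sin^2\theta))\bigr]^{-1}$, positive by the curvature bounds \eqref{bounds of curvatures} from Lemma~\ref{le:regularity}.

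Next I would set up the recursion. Having identified that $s^{1/2}$ is the right gauge, I define $F(\tau,\sigma,\theta)$ by $F = G(\sigma\tau,\sigma^2,\theta)/\sigma^2$, extended analytically across $\sigma=0$ using that $\varphi$ is real analytic and vanishes to second order; then $F(b_1(\theta),0,\theta)=0$ and $\partial_\tau F(b_1(\theta),0,\theta) = -R b_1(\theta)(1 - R(\kappa_1\cos^2\theta+\kappa_2\sin^2\theta)) + (\text{cubic term contribution, which is } O(\sigma) \text{ hence absent at } \sigma=0)$, which is nonzero since $b_1>0$ and the curvature factor is positive. By the analytic implicit function theorem $\tau = \tau(\sigma,\theta) = \sum_{j\ge1} b_j(\theta)\sigma^{j-1}$ is analytic in $\sigma$ near $\sigma=0$, uniformly in $\theta$ by compactness of $[0,2\pi]$ and the uniform positivity of the relevant quantities on $\partial\Omega$ (again \eqref{bounds of curvatures}). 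Setting $\sigma=\sqrt s$ gives \eqref{Puiseux series in s}. Then I substitute $\cos\eta(s,\theta) = 1 - \tfrac12\eta^2 + \tfrac1{24}\eta^4 + O(\eta^6)$ into \eqref{area wanted}, using $\eta = b_1 s^{1/2} + b_2 s + b_3 s^{3/2} + O(s^2)$: collecting powers, $\eta^2 = b_1^2 s + 2b_1 b_2 s^{3/2} + (b_2^2 + 2b_1 b_3)s^2 + O(s^{5/2})$ and $\eta^4 = b_1^4 s^2 + O(s^{5/2})$, so $1-\cos\eta = \tfrac12 b_1^2 s + b_1 b_2 s^{3/2} + \tfrac12(b_2^2+2b_1b_3 - \tfrac1{12}b_1^4)s^2 + O(s^{5/2})$. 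Integrating over $\theta$ yields \eqref{asymptotics with b_j}, provided the $O(s^{5/2})$ remainder is uniform in $\theta$ so it survives the integration — which follows from the uniform analyticity of $\tau(\sigma,\theta)$.

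The main obstacle is the degeneracy of the naive implicit function theorem at $(\eta,s)=(0,0)$: because $\varphi$ vanishes to second order, the map $\eta\mapsto G(\eta,0,\theta)$ has a double root at $\eta=0$, so one cannot directly solve for $\eta$. The resolution — the $\eta = \sqrt s\,\tau$ blow-up — is standard for Puiseux expansions, but it requires care in verifying that $F$ extends analytically across $\sigma=0$ (one must check that every term of the Taylor expansion of $G$ in $(\eta,s)$ carries enough powers of $s$ after the substitution $\eta = \sigma\tau$, $s=\sigma^2$, which it does precisely because $\varphi = O(|z|^2)$ and $(R+s)\cos\eta - R = s - \tfrac12(R+s)\eta^2 + \dots$ has its lowest-order term $s = \sigma^2$). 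A secondary point is ensuring all estimates are uniform in $\theta\in[0,2\pi]$; this is where the strict inequalities $\kappa_j < 1/R$ on the compact set $\partial\Omega$ — guaranteed by Lemma~\ref{le:regularity}(4) together with properness — are essential, as they bound $b_1(\theta)$ and $\partial_\tau F$ away from $0$ and $\infty$.
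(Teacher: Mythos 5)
Your proof is correct, and the part from the Puiseux series onward (substituting $1-\cos\eta=\frac12\eta^2-\frac1{24}\eta^4+O(\eta^6)$ into \eqref{area wanted} and collecting powers of $s^{1/2}$) is exactly the paper's computation. Where you differ is in how the expansion \eqref{Puiseux series in s} itself is produced. You attack the degenerate equation $G(\eta,s,\theta)=0$ head-on with the blow-up $\eta=\sigma\tau$, $s=\sigma^2$, check that $F=G(\sigma\tau,\sigma^2,\theta)/\sigma^2$ is analytic across $\sigma=0$ (which does work, precisely because $\varphi=O(|z|^2)$ and $(R+s)\cos\eta-R=s-\frac12(R+s)\eta^2+\dots$), and apply the analytic implicit function theorem at $(\tau,\sigma)=(b_1(\theta),0)$, where $\partial_\tau F=-Rb_1(\sigma_1\cos^2\theta+\sigma_2\sin^2\theta)\neq 0$. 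The paper instead sidesteps the degeneracy by exchanging the roles of the variables: since $\partial_s G(0,0,\theta)=1$, the ordinary implicit function theorem gives $s=\sum_{j\ge1}a_j(\theta)\eta^j$; two differentiations of $G(\eta,s(\eta,\theta),\theta)=0$ show $a_1=0$ and $a_2=\frac R2\bigl[(1-R\kappa_1)\cos^2\theta+(1-R\kappa_2)\sin^2\theta\bigr]>0$, and inverting this series yields the half-integer expansion with $b_1>0$. The two mechanisms are equivalent and give the same $b_1$ (your $\frac12 b_1^2=\bigl[R(\sigma_1\cos^2\theta+\sigma_2\sin^2\theta)\bigr]^{-1}$ matches \eqref{main term}); the paper's version is slightly leaner because it needs only the standard IFT plus two derivative evaluations, while yours makes the origin of the exponent $1/2$ and the $\theta$-uniformity of the remainder more transparent. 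One small correction: your appeal to ``the compact set $\partial\Omega$'' and properness is off the mark, since $\partial\Omega$ is unbounded here; but it is also unnecessary, because the lemma is pointwise in $\xi$, so all that is needed is $\sigma_j=1-R\kappa_j(\xi)>0$ at the fixed point (Lemma~\ref{le:regularity}(4)) together with compactness of $\theta\in[0,2\pi]$, which bounds $b_1(\theta)$ and $\partial_\tau F$ away from zero uniformly in $\theta$.
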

 
 \noindent
\begin{proof} Since the function $G$ given by \eqref{auxiliary function for IFT} satisfies
$$
G(0,0,\theta) = 0\ \mbox{ and }\ \frac {\partial G}{\partial s}(0,0,\theta) = 1\ \mbox{ for every }  0 \le \theta \le 2\pi,
$$
by the implicit function theorem there exists a sequence $\{a_j(\theta)\}_{j=1}^\infty$ such that $s = s(\eta, \theta)$ is written as
\begin{equation}
\label{power series in eta}
s = \sum_{j=1}^\infty a_j(\theta)\eta^j\ \mbox{ for small } \eta \ge 0.
\end{equation}
By differentiating the identity $G(\eta, s(\eta, \theta), \theta) = 0$ with respect to $\eta$, we get
\begin{eqnarray*}
0 =&& s_\eta\cos\eta+ (R+s)(-\sin\eta)
\\
&& - \varphi_{1}((R+s)\sin\eta\cos\theta,(R+s)\sin\eta\sin\theta)\left( (R+s)\cos\eta\cos\theta + s_\eta\sin\eta\cos\theta\right)
\\
&& - \varphi_{2}((R+s)\sin\eta\cos\theta,(R+s)\sin\eta\sin\theta)\left( (R+s)\cos\eta\sin\theta + s_\eta\sin\eta\sin\theta\right).
 \end{eqnarray*}
 By setting $\eta=0$, we get 
 \begin{equation}
 \label{coefficient a1}
 a_1(\theta) = s_\eta(0,\theta) = 0. 
 \end{equation}
 Differentiating the above identity with respect to $\eta$ once  more and putting $\eta=0$ yield that
 $$
 0 = s_{\eta\eta}(0,\theta) -R + R^2\kappa_1(\xi)\cos^2\theta + R^2\kappa_2(\xi)\sin^2\theta, 
 $$
and hence 
\begin{eqnarray}
a_2(\theta) &=& \frac12 s_{\eta\eta}(0,\theta) = \frac 12 R\left[ (1-R\kappa_1(\xi))\cos^2\theta + (1-R\kappa_2(\xi))\sin^2\theta \right] \nonumber
\\
&\ge& \frac 12 R\left[1-R\max\{\kappa_1(\xi), \kappa_1(\xi)\}\right] > 0.\label{coefficient a2}
\end{eqnarray}
In view of \eqref{power series in eta}, \eqref{coefficient a1}, and  \eqref{coefficient a2}, we see that there exists a sequence $\{b_j(\theta)\}_{j=1}^\infty$ such that $b_1>0$ and $\eta=\eta(s,\theta)$ is expanded as the Puiseux series \eqref{Puiseux series in s} in $s$.
With the aid of \eqref{Puiseux series in s}, we calculate for $\eta = \eta(s,\theta)$ 
\begin{eqnarray*}
 1-\cos\eta &=& \frac 12\eta^2-\frac 1{24}\eta^4 + O\left(\eta^{6}\right)
 \\
 &=& \frac 12 b_1^2 s + b_1b_2s^{\frac 32} + \frac 12\left(b_2^2+2b_1b_3-\frac1{12}b_1^4\right)s^2 + O\left(s^{\frac52}\right)\ \mbox{ as } s \downarrow 0,
 \end{eqnarray*}
 and hence \eqref{area wanted} implies \eqref{asymptotics with b_j}, as we desired to prove. 
 \end{proof}

\vskip.3cm

\begin{proofC}
Since $\partial\Omega$ is a real analytic hypersurface by Lemma {\rm \ref{le:regularity}},  we can write
\begin{equation}
\label{Taylor expansion for varphi}
\varphi(z_1,z_2) = \sum_{k=2}^\infty P_k(z_1,z_2)\ \mbox{ for sufficiently small } \sqrt{z_1^2+z_2^2}\,,
\end{equation}
where each $P_k(z_1,z_2)$ is a homogeneous polynomial of degree $k$ and in particular 
\begin{equation}
\label{quadratic term}
P_2(z_1,z_2) = -\frac 12 (\kappa_1(\xi) z_1^2 +\kappa_2(\xi) z_2^2). 
\end{equation}

Now we compute the integrands of the expansion~\eqref{asymptotics with b_j}. For $P_k$ given in \eqref{Taylor expansion for varphi},  we write
 $$
 P_k(v) = P_k(\cos\theta,\sin\theta)\ \mbox{ for } v = (\cos\theta,\sin\theta).
 $$
By substituting this and \eqref{Taylor expansion for varphi} into \eqref{spherical-Cartesian}, since
$$
\cos\eta = 1-\frac 12\eta^2+\frac 1{24}\eta^4 + O\left(\eta^{6}\right) \ \mbox{ and } \ \sin\eta = \eta - \frac 16\eta^3+ O\left(\eta^{5}\right),
$$
we see that
$$
(R+s)\left(1-\frac 12\eta^2+\frac 1{24}\eta^4 + O\left(\eta^{6}\right)\right) - R - \sum_{k=2}^\infty (R+s)^k\left(\eta - \frac 16\eta^3+ O\left(\eta^{5}\right)\right)^k P_k(v) = 0.
$$
Then, with \eqref{Puiseux series in s} ($\eta = b_1s^\frac 12+ b_2 s + b_3s^{\frac 32}+ O\left(s^{2}\right)$) in hand, we equate to zero the coefficients of $s, s^{\frac 32},$ and $s^2$. The coefficient of $s$ gives
\begin{equation}
\label{coefficient in s1}
1-\frac 12 Rb_1^2 - R^2b_1^2P_2(v) = 0.
\end{equation}
The coefficient of $s^{\frac32}$ gives
\begin{equation}
\label{coefficient in s32}
-Rb_1b_2-2R^2b_1b_2P_2(v)-R^3b_1^3P_3(v)= 0.
\end{equation}
The coefficient of $s^{2}$ gives
\begin{eqnarray}
&&-\frac12\left\{ b_1^2+Rb_2^2 +2Rb_1b_3\right\} + \frac 1{24}R b_1^4 \nonumber
\\
&& -P_2(v)\left\{ 2Rb_1^2 +R^2b_2^2 + 2R^2b_1b_3 -\frac 13 R^2b_1^4 \right\} \nonumber
\\
&&-3R^3b_1^2b_2P_3(v) - R^4b_1^4P_4(v)= 0.\label{coefficient in s2}
\end{eqnarray}
\par
Now, set 
\begin{equation}
\label{sigma_j}
\sigma_j= 1-R\kappa_j(\xi) > 0\ \mbox{ for } j =1, 2.
\end{equation}
Notice that 
\begin{equation}
\label{monge_ampere constant c}
\sigma_1\sigma_2 = c,
\end{equation}
where $c > 0$ is the positive number given by \eqref{monge-ampere}.

In view of \eqref{asymptotics with b_j}, with the aid of \eqref{coefficient in s1} and \eqref{quadratic term}, we obtain:
\begin{equation}
\label{main term}
 \frac 12\, b_1^2 = \frac 1{R(1+2RP_2(v))} = \frac 1{R(\sigma_1\cos^2\theta + \sigma_2\sin^2\theta)}.
 \end{equation}
The coefficient of $s$ in \eqref{formula-asymptotics} is thus easily computed from this formula, by using 
\eqref{monge_ampere constant c}  and \eqref{starting point}:
\begin{equation}
\label{the first coefficient}
\frac 12 \int_0^{2\pi} b_1^2\, d\theta =  \frac {2\pi}{R\sqrt{c}}\,;
\end{equation}
here $c $ is the positive number given by \eqref{monge-ampere}.

By using~\eqref{coefficient in s32} and \eqref{main term}, we have:
\begin{equation}
\label{2nd term}
b_1b_2= -\frac {2^{\frac32}R^{\frac12} P_3(v)}{(\sigma_1\cos^2\theta + \sigma_2\sin^2\theta)^{\frac 52}},
\end{equation}
and hence we get  
\begin{equation}
\label{the second coefficient}
\int_0^{2\pi} b_1b_2 d\,\theta = 0,
\end{equation}
since $b_1b_2$ is the sum of odd functions of either $\cos\theta$ or $\sin\theta$ because of \eqref{2nd term}.
Thus, by \eqref{asymptotics with b_j}
the coefficient of $s^{3/2}$ in \eqref{formula-asymptotics} is zero.

Finally, it follows from \eqref{coefficient in s2}, \eqref{main term}, and \eqref{2nd term} that
\begin{eqnarray}
\frac12\left(b_2^2+2b_1b_3-\frac1{12}b_1^4\right) &=& -\frac 7{6R^2(\sigma_1\cos^2\theta + \sigma_2\sin^2\theta)^2} + \frac 1{6R^2(\sigma_1\cos^2\theta + \sigma_2\sin^2\theta)^3}\nonumber
\\
&&-  \frac {4P_2(v)}{R(\sigma_1\cos^2\theta + \sigma_2\sin^2\theta)^2} + \frac {4P_2(v)}{3R(\sigma_1\cos^2\theta + \sigma_2\sin^2\theta)^3}\nonumber
\\
&&+ \frac {12R^2(P_3(v))^2}{(\sigma_1\cos^2\theta + \sigma_2\sin^2\theta)^4} - \frac {4RP_4(v)}{(\sigma_1\cos^2\theta + \sigma_2\sin^2\theta)^3}.\label{3rd coefficient}
\end{eqnarray}

A long  but important computation, that is carried out in the Appendix (Lemmas \ref{le: the first four terms in 3rd in terms of c and K} and \ref{le: the sum of the last two terms in 3rd in terms of c and K}), then yields 
the coefficient of $s^2$ in \eqref{formula-asymptotics}, that is
\begin{multline*}
 \frac {R^2}{2} \int_0^{2\pi}\left(b_2^2+2b_1b_3-\frac1{12}b_1^4\right)d\theta =\frac{\pi}{8 c\sqrt{c}} \biggl\{ (R^2 K+c-1)^2 -4c\,(c+3) \\
 - \frac 43\,c\,R^4\left[(1-R \kappa_1)^{-3}\left(\varphi_{111}\right)^2+ (1-R \kappa_2)^{-3}\left(\varphi_{222}\right)^2\right]\biggr\},
 \end{multline*}
where the derivatives of $\varphi$ are evaluated at $(0,0)$.

In the last formula, we set 
\begin{equation}
\label{definition of the function g}
g = - \frac 43\,c\,R^4\left[(1-R \kappa_1)^{-3}\left(\varphi_{111}\right)^2+ (1-R \kappa_2)^{-3}\left(\varphi_{222}\right)^2\right],
\end{equation}
that is clearly non-positive and is null if and only if both third derivatives vanish. 
\end{proofC}

\vskip.3cm

In the next proposition, $\kappa^{*}_{1}$ and $\kappa^{*}_{2}$ denote the principal curvatures of $\Gamma_{*}$ at the point $x^{*} = \xi + \rho_{*}\nu(\xi)$ defined by 
\eqref{the relation of principal curvatures} with
$\rho_{*}= R/(1+\sqrt{c})$, $K^{*}= \kappa^{*}_{1}\,\kappa^{*}_{2}$, and $g$ is the function
appearing in \eqref{formula-asymptotics}, whose expression is given by \eqref{definition of the function g}.


\begin{proposition}
\label{prop:asymptotics-2} It holds that
\begin{equation}
\label{formula-gradient_of_Gaussian_curvature}
(\kappa^{*}_{2}-\kappa^{*}_{1})^{2} g = -\frac{4R^{4}}{3\sqrt{c}}\cdot\frac{\|\nabla K^{*}\|^{2}}{(1+\rho_{*}\kappa^{*}_{1})^{3}(1+\rho_{*}\kappa^{*}_{2})^{3}}
\end{equation}
or, in terms of the invariants $H^*$ and $K^*$,
\begin{equation}
\label{formula-gradient_of_Gaussian_curvature2}
\|\nabla K^{*}\|^{2}=\frac{3\sqrt{c}}{R^{4}}\,g\,[K^*-(H^*)^2]\,[1+2\,\rho_{*}H^{*}+\rho_*^2 K^*]^{3},
\end{equation}
where $\|\nabla K^{*}\|$ is the length of the gradient of the Gauss curvature $K^{*}$  with respect to the induced metric of the hypersurface $\Gamma_{*}$.
\end{proposition}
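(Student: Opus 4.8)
The plan is to fix a point $\xi\in\partial\Omega$, set $x^{*}=\xi+\rho_{*}\nu(\xi)\in\Gamma_{*}$, and work with the parallel diffeomorphism $\Phi:\partial\Omega\to\Gamma_{*}$, $\Phi(\eta)=\eta+\rho_{*}\nu(\eta)$, already used in Proposition~\ref{prop:constant mean curvature}. Since $\partial\Omega$ and $\Gamma_{*}$ share their normal lines, the unit principal directions $e_{1},e_{2}$ of $\partial\Omega$ at $\xi$ are also principal directions of $\Gamma_{*}$ at $x^{*}$; moreover $d\Phi_{\xi}(e_{j})=(1-\rho_{*}\kappa_{j})\,e_{j}$, and \eqref{the relation of principal curvatures} gives $\kappa_{j}^{*}=\kappa_{j}/(1-\rho_{*}\kappa_{j})$, hence
\begin{equation*}
1+\rho_{*}\kappa_{j}^{*}=\frac{1}{1-\rho_{*}\kappa_{j}},\qquad \kappa_{2}^{*}-\kappa_{1}^{*}=\frac{\kappa_{2}-\kappa_{1}}{(1-\rho_{*}\kappa_{1})(1-\rho_{*}\kappa_{2})}.
\end{equation*}
Because $\{e_{1},e_{2}\}$ spans both tangent planes and is orthonormal in $\mathbb R^{3}$, reparametrising the curves of $\Gamma_{*}$ through $x^{*}$ by $\Phi$ shows that, for every smooth $f$ on $\Gamma_{*}$,
\begin{equation*}
\|\nabla^{\Gamma_{*}}f\|^{2}(x^{*})=\sum_{j=1}^{2}\frac{1}{(1-\rho_{*}\kappa_{j})^{2}}\bigl(e_{j}(f\circ\Phi)\bigr)^{2},
\end{equation*}
where on the right $e_{j}$ denotes the unit tangent vector to $\partial\Omega$ at $\xi$ and $e_{j}(f\circ\Phi)$ the corresponding directional derivative.

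First I would compute $e_{j}(K^{*}\circ\Phi)$. From $\kappa_{i}^{*}\circ\Phi=\kappa_{i}/(1-\rho_{*}\kappa_{i})$ and the chain rule, $e_{j}(\kappa_{i}^{*}\circ\Phi)=(1-\rho_{*}\kappa_{i})^{-2}e_{j}\kappa_{i}$, so by the product rule
\begin{equation*}
e_{j}(K^{*}\circ\Phi)=\frac{\kappa_{2}(1-\rho_{*}\kappa_{2})\,e_{j}\kappa_{1}+\kappa_{1}(1-\rho_{*}\kappa_{1})\,e_{j}\kappa_{2}}{(1-\rho_{*}\kappa_{1})^{2}(1-\rho_{*}\kappa_{2})^{2}}.
\end{equation*}
To bring in the third derivatives of $\varphi$ I would use two facts. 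In the principal coordinate system of Proposition~\ref{prop:asymptotics} the shape operator of $\partial\Omega$ equals $-\mathrm{Hess}\,\varphi$ up to terms that are $O(|z|^{2})$, and a first order perturbation of the $\kappa_{1}$ eigenvalue (the off diagonal entry $\varphi_{12}$ enters only quadratically in $z$) gives, at $\xi$, $e_{1}\kappa_{1}=-\varphi_{111}$ and $e_{2}\kappa_{2}=-\varphi_{222}$. On the other hand, differentiating the Monge-Amp\`ere relation~\eqref{monge-ampere}, $(1-R\kappa_{1})(1-R\kappa_{2})=c$, along $e_{j}$ yields $\sigma_{2}\,e_{j}\kappa_{1}+\sigma_{1}\,e_{j}\kappa_{2}=0$ with $\sigma_{i}=1-R\kappa_{i}$, whence $e_{1}\kappa_{2}=(\sigma_{2}/\sigma_{1})\varphi_{111}$ and $e_{2}\kappa_{1}=(\sigma_{1}/\sigma_{2})\varphi_{222}$. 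Substituting into the numerator and simplifying with $\sigma_{1}\sigma_{2}=c$ (equivalently $2H-RK=(1-c)/R$), $\rho_{*}=R/(1+\sqrt c)$, and the identity $1-\rho_{*}\kappa_{j}=(\sqrt c+\sigma_{j})/(1+\sqrt c)$ collapses it to $\sqrt c\,(\kappa_{1}-\kappa_{2})\varphi_{jjj}/\sigma_{j}$ (with $\varphi_{111}$ for $j=1$ and $\varphi_{222}$ for $j=2$). Plugging this back gives
\begin{equation*}
\|\nabla^{\Gamma_{*}}K^{*}\|^{2}(x^{*})=\frac{c\,(\kappa_{1}-\kappa_{2})^{2}}{(1-\rho_{*}\kappa_{1})^{4}(1-\rho_{*}\kappa_{2})^{4}}\left[\frac{\varphi_{111}^{2}}{\sigma_{1}^{2}(1-\rho_{*}\kappa_{1})^{2}}+\frac{\varphi_{222}^{2}}{\sigma_{2}^{2}(1-\rho_{*}\kappa_{2})^{2}}\right].
\end{equation*}

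Finally I would substitute this, together with~\eqref{definition of the function g} for $g$ and the parallel surface relations above, into both sides of~\eqref{formula-gradient_of_Gaussian_curvature}. After cancelling the common factor $-\tfrac{4}{3}R^{4}(\kappa_{1}-\kappa_{2})^{2}$, the asserted identity reduces to the equality of the coefficients of $\varphi_{111}^{2}$ and of $\varphi_{222}^{2}$ on the two sides, which in turn amounts to the elementary identities
\begin{equation*}
\sqrt c\,(1-\rho_{*}\kappa_{1})=\sigma_{1}(1-\rho_{*}\kappa_{2}),\qquad \sqrt c\,(1-\rho_{*}\kappa_{2})=\sigma_{2}(1-\rho_{*}\kappa_{1}),
\end{equation*}
both immediate from $1-\rho_{*}\kappa_{j}=(\sqrt c+\sigma_{j})/(1+\sqrt c)$ and $\sigma_{1}\sigma_{2}=c$. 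This proves~\eqref{formula-gradient_of_Gaussian_curvature}, and~\eqref{formula-gradient_of_Gaussian_curvature2} follows by writing $(\kappa_{2}^{*}-\kappa_{1}^{*})^{2}=4[(H^{*})^{2}-K^{*}]$ and $(1+\rho_{*}\kappa_{1}^{*})(1+\rho_{*}\kappa_{2}^{*})=1+2\rho_{*}H^{*}+\rho_{*}^{2}K^{*}$. At umbilic points of $\Gamma_{*}$, where the principal frame is ambiguous, both sides of~\eqref{formula-gradient_of_Gaussian_curvature} vanish identically (the factor $\kappa_{2}^{*}-\kappa_{1}^{*}$ on the left, and $\|\nabla^{\Gamma_{*}}K^{*}\|^{2}$, which the computation shows is proportional to $(\kappa_{1}-\kappa_{2})^{2}$, on the right), so the identity holds there as well.

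The step I expect to be the main obstacle is the clean identification $e_{1}\kappa_{1}=-\varphi_{111}$, $e_{2}\kappa_{2}=-\varphi_{222}$ at $\xi$: because the principal directions of $\partial\Omega$ rotate in a neighbourhood of $\xi$, one must verify that the first order variation of the ordered eigenvalues of the shape operator along a principal direction coincides with that of the corresponding diagonal entry of $-\mathrm{Hess}\,\varphi$, the off diagonal entry contributing only at second order. It is also worth stressing where the hypotheses enter: the relation $\kappa_{j}^{*}=\kappa_{j}/(1-\rho_{*}\kappa_{j})$ and the choice of $\rho_{*}$ come from Lemma~\ref{le:regularity} and Proposition~\ref{prop:constant mean curvature}, whereas the Monge-Amp\`ere identity~\eqref{monge-ampere} --- which is exactly what forces $e_{j}\kappa_{2}$ to be proportional to $e_{j}\kappa_{1}$, and hence $\|\nabla^{\Gamma_{*}}K^{*}\|^{2}$ to depend only on $\varphi_{111}$ and $\varphi_{222}$ --- is the imprint of $\Omega$ being uniformly dense in $\Gamma$. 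Everything after the numerator simplification is routine bookkeeping with rational functions of $\sigma_{1},\sigma_{2}$ and $c$.
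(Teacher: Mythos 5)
Your proposal is correct and follows essentially the same route as the paper: Monge principal coordinates to identify $\varphi_{111},\varphi_{222}$ with $-\partial\kappa_1/\partial z_1,-\partial\kappa_2/\partial z_2$, the differentiated Monge-Amp\`ere relation (equivalently, the constancy of $\kappa_1^*+\kappa_2^*$ on $\Gamma_*$) to eliminate the mixed third derivatives, the parallel-map factors $(1-\rho_*\kappa_j)$ to compute $\|\nabla K^*\|^2$, and the same closing identities $\sqrt{c}\,(1-\rho_*\kappa_1)=\sigma_1(1-\rho_*\kappa_2)$, $\sqrt{c}\,(1-\rho_*\kappa_2)=\sigma_2(1-\rho_*\kappa_1)$. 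The only (minor, easily repaired) difference is at umbilic points, where your appeal to the computed proportionality to $(\kappa_1-\kappa_2)^2$ is not literally available since the Monge expansion requires $\kappa_1\neq\kappa_2$; the paper instead notes that at such points $K^*$ attains its maximum value $(H^*)^2$ on the constant-mean-curvature surface $\Gamma_*$, so $\nabla K^*=0$ and both sides vanish.
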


\begin{proof}
Note that
$$
 \kappa_{j}= \frac {\kappa^{*}_{j}}{1+\rho_{*}\kappa^{*}_{j}} \ \mbox{  for } j = 1, 2, \quad \kappa^{*}_{1}+\kappa^{*}_{2} = \frac {1-c}{R\sqrt{c}} \  \mbox{ and }\ \rho_{*} = \frac R{1+\sqrt{c}}.
 $$

First notice that formula \eqref{formula-gradient_of_Gaussian_curvature} holds true
if $\kappa_1= \kappa_2$ or, which is equivalent, if $\kappa^{*}_{1}=\kappa^{*}_{2}$.
In fact,  in this case, the Gauss curvature $K^*$ of $\Gamma_*$ attains its maximum value $(H^*)^2$ (at $x^*$). This means that $\nabla K^*$ vanishes (at $x^*$)
and hence both sides of \eqref{formula-gradient_of_Gaussian_curvature} equal zero.


We now suppose that $\kappa_1\not=\kappa_2$. Thus, by using the Monge principal coordinate system (\cite[p. 156]{BL1973springer-book}),  we have as $\sqrt{z_{1}^{2}+z_{2}^{2}} \to 0$ that
\begin{eqnarray*}
\varphi(z_{1},z_{2}) &=& -\frac12\kappa_{1}z_{1}^{2}-\frac12\kappa_{2}z_{2}^{2}
\\
&& -\frac 16\left\{\frac{\partial \kappa_{1}}{\partial z_{1}}z_{1}^{3}+3\frac{\partial \kappa_{1}}{\partial z_{2}}z_{1}^{2}z_{2} + 3\frac{\partial \kappa_{2}}{\partial z_{1}}z_{1}z_{2}^{2} + \frac{\partial \kappa_{2}}{\partial z_{2}}z_{2}^{3}\right\} + O\left((z_{1}^{2}+z_{2}^{2})^{2}\right).
\end{eqnarray*}
Therefore, we have at $(0,0)$:
$$
\varphi_{111}= - \frac{\partial \kappa_{1}}{\partial z_{1}}\ \mbox{ and }\ \varphi_{222} = -\frac{\partial \kappa_{2}}{\partial z_{2}}.
$$
Hence, we obtain from \eqref{definition of the function g}:
\begin{equation}
\label{pre-version} 
-\frac{3\sqrt{c}\, g}{4R^4}\,  (1+\rho_*\kappa_1^*)^{3}(1+\rho_*\kappa_2^*)^{3}
=
(1+\rho_{*}\kappa^{*}_{1})^{2}\left(\frac{\partial \kappa^{*}_{1}}{\partial z_{1}}\right)^{2} + (1+\rho_{*}\kappa^{*}_{2})^{2}\left(\frac{\partial \kappa^{*}_{1}}{\partial z_{2}}\right)^{2}.
\end{equation}
\par
By recalling that $\Gamma^{*}$ is parameterized in $z$ by
$$
(z, \varphi(z)) -\rho_{*}\frac 1{\sqrt{1+|\nabla_z\varphi(z)|^{2}}}\left(-\nabla_{z}\varphi(z),1\right),
$$
 we have at $z=0$ (that is at $x^{*} \in \Gamma_{*}$) that
\begin{eqnarray*}
\|\nabla K^{*}\|^{2} &=& (1-\rho_{*}\kappa_{1})^{-2}\left(\frac{\partial K^{*}}{\partial z_{1}}\right)^{2} + 
(1-\rho_{*}\kappa_{2})^{-2}\left(\frac{\partial K^{*}}{\partial z_{2}}\right)^{2}
\\
&=& (1+\rho_{*}\kappa^{*}_{1})^{2}\left(\frac{\partial K^{*}}{\partial z_{1}}\right)^{2} + 
(1+\rho_{*}\kappa^{*}_{2})^{2}\left(\frac{\partial K^{*}}{\partial z_{2}}\right)^{2}.
\end{eqnarray*}
Notice now that
$$
\left(\frac{\partial K^{*}}{\partial z_{j}}\right)^{2} = \left(\frac{\partial \kappa_{1}^{*}}{\partial z_{j}}\right)^{2}(\kappa_{2}^{*}-\kappa_{1}^{*})^{2}\ \mbox{ for } j = 1, 2,
$$
since $K^{*}=\kappa^{*}_{1}\kappa^{*}_{2}$ and $\kappa^{*}_{1}+\kappa^{*}_{2} = \frac {1-c}{R\sqrt{c}}$.

Therefore, combining these with \eqref{pre-version}  gives the formula \eqref{formula-gradient_of_Gaussian_curvature}, which completes the proof of 
Proposition \ref{prop:asymptotics-2}.
\end{proof}


\setcounter{equation}{0}
\setcounter{theorem}{0}

\section{Classification of stationary isothermic surfaces in $\mathbb{R}^3$}
\label{section4}

We present two proofs of Theorem \ref{th:uniformly dense}, each one with its own interest.

\vskip.2cm

\begin{proofA}
 Based on Propositions \ref{prop:constant mean curvature}, \ref{prop:asymptotics} and \ref{prop:asymptotics-2}, this proof relies on the theories of properly embedded minimal surfaces and properly embedded constant mean curvature  surfaces in $\mathbb R^3$ and the theory of transnormal functions and transnormal systems.

First of all,  we note that, being parallel to $\partial\Omega$, both $\Gamma$ and $\Gamma_{*}$ are unbounded and connected, which are properties  
they inherit from $\partial\Omega$.

 Since $\Omega$ is uniformly dense in $\Gamma$, by Proposition \ref{prop:asymptotics}, there exists a constant $d$ such that 
\begin{equation}
\label{the second coeff is constant}
h(K) + g=d \ \mbox{ on } \ \partial\Omega.
\end{equation}
Moreover, since 
$$
  H^*=\frac{1-c}{2\, R\sqrt{c}}, \quad \rho_*=\frac R{1+\sqrt{c}},  \ \mbox{ and }  \  K= \frac { K^{*}}{1+2\,\rho_{*}\,H^*+\rho_*^2 K^*},
$$
after a few straightforward computations, Propositions \ref{prop:asymptotics} and \ref{prop:asymptotics-2} give that
 \begin{equation}
 \label{transnormal Gaussian curvature}
 \| \nabla K^{*}\|^{2}= \Psi(K^{*})\ \mbox{ on } \Gamma_{*}\ \mbox{ and } \Psi\left((H^*)^{2}\right) = 0,
 \end{equation}
where $\Psi$ is a  polynomial with coefficients depending only on $c, R, $ and $d$, and the degree of $\Psi$ is at most $4$. 
 
We distinguish two cases: 
$$
\mbox{ (A)\ $K^{*}$ is constant on $\Gamma_{*}$;\quad (B)\ $K^{*}$ is not constant on $\Gamma_{*}$}.
$$

In case (A), since also $H^*$ is constant on $\Gamma_{*}$,  then
$\kappa_1^*$ and $\kappa_2^*$ are both constant on $\Gamma_{*}$ and hence $\Gamma_{*}$ must be either a plane or a circular cylinder, by a classical result. Thus, the conclusion of Theorem \ref{th:uniformly dense} holds true, since  both $\partial\Omega$ and $\Gamma$ are parallel to $\Gamma_{*}$. 

In case (B), the first equation in \eqref{transnormal Gaussian curvature} shows that the Gauss curvature $K^{*}$ is a transnormal function on the connected complete Riemannian manifold $\Gamma_{*}$ and it induces a transnormal system $\mathcal F$ (see Wang \cite{W1987MathAnn}, Miyaoka \cite{Mi2013DiffGeoAppl}, and Bolton \cite{B1973QJMathOxford}). To be precise, in our case, each component of the level sets of $K^{*}$ is called either ``a foil'' or ``a singular foil'' if its dimension is either $1$ or $0$ respectively, and all the components of the level sets of $K^{*}$ generate $\mathcal F$.  All the foils are parallel to each other and any geodesic normal to a foil is orthogonal to every foil. Here, every foil must be a regular curve properly embedded in $\Gamma_{*}$, and every singular foil must be a point in $\Gamma_{*}$ which is a component of the focal varieties (possibly empty) $V_{+}=\{ x \in \Gamma_{*} : K^{*}(x) = \max K^{*} \}$ and $V_{-}= \{ x \in \Gamma_{*} : K^{*}(x) = \min K^{
 *} \}$. 
 
Since a regular curve properly embedded in $\Gamma_{*}$ is either a closed curve or a curve with infinite length and $\Gamma_{*}$ is unbounded, we have that 
\begin{equation}
\label{simple topology}
\Gamma_{*} \mbox{ is homeomorphic to either } \mathbb S^{1}\times \mathbb R\ \mbox{ or } \mathbb R^{2}. 
\end{equation}
This result was proved by Miyaoka in~\cite[Theorem 1.1]{Mi2013DiffGeoAppl}. For instance, if there exists a singular foil, then every foil in a neighborhood of it must be a closed curve and eventually $\Gamma_{*}$ must be homeomorphic to $\mathbb R^{2}$, and if there is no singular foil and one foil is a closed curve, then $\Gamma_{*}$ must be homeomorphic to $\mathbb S^{1}\times \mathbb R$.

Accordingly, it suffices to prove that (B) contradicts the fact that $\Gamma_*$ has constant mean curvature, as guaranteed by Proposition \ref{prop:constant mean curvature}. We arrive at this conclusion by examining two possibilities.


(I) If $\Gamma_*$ has non-zero constant mean curvature (that is when the constant $c$ in \eqref{monge-ampere} is different from $1$), Proposition \ref{prop:constant mean curvature}, together with \eqref{simple topology},  shows that $\Gamma_*$ is properly embedded and of finite topology in $\mathbb R^3, $ it is homeomorphic to either $\mathbb S^2 \setminus \{ \mathcal N, \mathcal S\}$ or $\mathbb S^2 \setminus \{ \mathcal N\}$ (here, $\mathcal N$ and $\mathcal S$ denote the north and south poles of the sphere $\mathbb S^2$),  and each of its ends corresponds to each pole. Then, a theorem due to Meeks \cite[Theorem 1, p.540]{M1988jdg} shows that $\Gamma_*$ is homeomorphic to $\mathbb S^2 \setminus \{ \mathcal N, \mathcal S\}$ and, moreover, a theorem due to Korevaar-Kusner-Solomon \cite[Theorem 2.11, p. 476]{KKS1989jdg} shows that
 $\Gamma_*$ must be either a circular cylinder or an unduloid. See also Kenmotsu \cite[p. 46]{K2003transMathMono221} for an unduloid and \cite{KK1993ProcSymposiaPureMath} for a survey of  properly embedded surfaces in $\mathbb R^3$ with constant mean curvature.  

Since $K^{*}$ is not constant on $\Gamma_{*}$ by assumption (B), \emph{we have that $\Gamma_*$  is an unduloid}, and hence
$\partial\Omega$ is parallel to an unduloid, by Proposition \ref{prop:constant mean curvature}.   Thus, we can choose two points
 $P, Q \in \partial\Omega$ such that 
\begin{equation}
\label{two  points}
 K(P) = K_+>0\mbox{ and }\  K(Q) = K_-<0;
\end{equation}
$P$ and $Q$ lie on $\partial\Omega$ at the maximum (minimum) distance from the common axis of $\partial\Omega$ and $\Gamma_*$.  The symmetry of $\partial\Omega$ ensures that the function $g$ in Proposition \ref{prop:asymptotics} vanishes at $P$ and $Q$, and hence we obtain that
 \begin{equation}
 \label{Gaussian curvatures for unduloid case}
 h(K(P)) + g = h(K_+)\ \mbox{ and }\ h(K(Q)) + g = h(K_-).
 \end{equation}
 
On the other hand, by the intermediate value theorem, there are points $P^{\pm}_* $ in $\partial\Omega$ with $0 < K(P^{+}_*) < K_+$ and $K_{-} < K(P^{-}_{*}) < 0$. Since $h(K)=R^4K^2+2(c-1)R^2K+h(0)$ and $g \le 0$, we obtain that
 $ h(K(P^{+}_*)) + g \le h(K(P^{+}_*)) < h(K_+)$, if $c > 1$, and
 $h(K(P^{-}_*)) + g \le h(K(P^{-}_*)) < h(K_-)$, if $0< c < 1$.
 These contradict \eqref{Gaussian curvatures for unduloid case} because of \eqref{the second coeff is constant}.
 
 
(II) If $\Gamma_*$ has zero mean curvature (that is when $c=1$), again we can claim that $\Gamma_*$  is a properly embedded and of finite topology in $\mathbb R^3$ and  that it is homeomorphic to either $\mathbb S^2 \setminus \{ \mathcal N, \mathcal S\}$ or $\mathbb S^2 \setminus \{ \mathcal N\}$ with each of its ends corresponding to each pole. 

Thus,  if $\Gamma_*$ is homeomorphic to $\mathbb S^2 \setminus \{ \mathcal N, \mathcal S\}$, either by combining results of Schoen \cite{Sc1983jdg} and Collin \cite[Theorem 2, p. 2]{C1997annals} or by combining results of L\'opez and Ros \cite{LR1991jdg} and Collin \cite[Theorem 2, p. 2]{C1997annals} we get that $\Gamma_*$ must be a catenoid.  Instead, if $\Gamma_*$ is homeomorphic to $\mathbb S^2 \setminus \{ \mathcal N\}$, a theorem of Meeks III and Rosenberg \cite[Theorem 0.1, p. 728]{MR2005annals} implies that $\Gamma_*$ must be either a plane or a helicoid.
See also \cite{MP2012univlecAMS} and \cite{CM2011gradst121AMS} for a survey on the minimal surface theory in $\mathbb R^3.$ 
Thus, since $K^{*}$ is not constant, \emph{$\Gamma_*$ must be either a catenoid or a helicoid.}

Now, recall that for $c=1$ we have that
\begin{equation}
\label{curvature_relation_for_minimal_surfaces}
K= \frac {K^*}{1+\rho_*^2K^*}.
\end{equation}

Assume that $\Gamma_*$ is a catenoid; 
then we know that $K^*\le 0$
and $\kappa_j^* \to 0$ as $|x| \to \infty$ for $j = 1,2,$
and hence, by \eqref{curvature_relation_for_minimal_surfaces}, we infer that
$$
K \le 0\ \mbox{ and } \kappa_j \to 0\ \mbox{ as } |x| \to \infty\ (j = 1,2).
$$

Then, with the aid of the interior  estimates  for the minimal surface equation (\cite[Corollary 16.7, p. 407]{GT1983springer}) and Schauder's interior estimates for higher order derivatives (\cite[Problem 6.1. (a), p. 141]{GT1983springer}), by proceeding as in \cite[Proof of Theorem 4.1, pp. 4833-4834]{MPS2006tams}, we see that, for any $k\in\mathbb N$, the $k-$th order derivatives of the function $\varphi$ in Proposition \ref{prop:asymptotics} converge to zero as $|x| \to \infty$; thus, it follows that
 \begin{equation}
 \label{flat}
 h(K) + g \to h(0) \ \mbox{ as } |x| \to \infty.
 \end{equation}
 On the other hand, since $\partial\Omega$ is parallel to the catenoid $\Gamma_*$, we choose a point $P_0 \in \partial\Omega$, which is one of the points nearest to the common axis of $\partial\Omega$ and $\Gamma_*$,  and which  satisfies $\displaystyle K(P_0) =\inf_{P \in \partial\Omega} K(P) < 0$.   Again, the symmetry of $\partial\Omega$ ensures that the function $g$  in Proposition \ref{prop:asymptotics} vanishes at $P_0$ and we conclude that
 $$
 h(K(P_0)) + g = h(K(P_0)) = R^4K(P_0)^2 + h(0) > h(0),
 $$
 that contradicts \eqref{flat} because of \eqref{the second coeff is constant}.
 
 Assume now that $\Gamma_*$ is a helicoid.
 Note that $K^*$ attains its negative minimum on the axis $\ell$ of $\Gamma_*$  and $K^*$ together with the principal curvatures $\kappa_1^*, \kappa_2^*$ tend to zero as the point goes away from $\ell$, as shown in \cite[Example 3.46 (Helicoid), p. 91]{MR2005gradst69AMS}. The same example and  \eqref{curvature_relation_for_minimal_surfaces} imply that $K$ attains its negative minimum on the helix $\tilde \ell$ in $\partial\Omega$ corresponding to $\ell$, and $K$ together with the principal curvatures $\kappa_1, \kappa_2$ tend to zero as the point goes away from $\tilde\ell$. Therefore, 
by the same argument used in the case of the catenoid, as the point on $\partial\Omega$ goes away from $\tilde\ell$,  we obtain that
\begin{equation}
 \label{flat helicoid}
h(K) + g \to h(0) +0 = h(0).
\end{equation}

On the other hand, if we choose a point $\xi_0 \in \tilde\ell$ corresponding to a point $x_0^* \in \ell$, since $K^*$ attains its negative minimum on $\ell$, at $x_0^*$ we have:
$$
\nabla K^*= 0\ \mbox{ and } \ \kappa_1^*\not= \kappa_2^*;
$$
this, together with \eqref{formula-gradient_of_Gaussian_curvature},  yields that $g = 0$ at $\xi_0 \in \tilde\ell$.
Therefore, it follows  that 
 \begin{equation*}
 \label{on the helix}
 h(K(\xi_0))  + g = R^4(\min K)^2 + h(0) +0 > h(0),
 \end{equation*}
that contradicts \eqref{flat helicoid}. The proof is complete. 
\end{proofA}

\vskip.3cm

The proof of Theorem~\ref{th:uniformly dense} presented above is divided in two steps. First, by using the transnormal condition~\eqref{transnormal Gaussian curvature} and the theories of CMC and minimal surfaces, we proved that either $\Gamma_*$ has constant Gauss curvature or it is globally isometric to an unduloid, a catenoid or a helicoid. Second, using the symmetries of the unduloid, the catenoid and the helicoid, and appropriate Schauder estimates (see the proof between Eqs.~\eqref{curvature_relation_for_minimal_surfaces} and~\eqref{flat}), we showed that $\Gamma_*$ cannot be isometric to any of these surfaces. This second step of the proof makes use of general arguments that may be useful in other contexts, but we would like to remark that there is an elementary proof using the explicit expressions of the unduloid, catenoid and helicoid. 

\vskip.2cm

\begin{proofB}
The proof proceeds by inspection, we just check that the transnormal condition~\eqref{transnormal Gaussian curvature}, which is coordinate independent, does not hold on these surfaces. 
\vskip.1cm
\noindent \emph{{\rm(1)} The unduloid.} The family of the unduloids can parametrized using coordinates $(u,v)\in \mathbb R/(2\pi\mathbb Z)\times \mathbb R$, and two real parameters $b>a>0$, see e.g.~\cite{HMO07}. In these coordinates the induced metric reads as
$$
\mathfrak g=\frac12\Big(a^2+b^2+(b^2-a^2)\,\sin\frac{2v}{a+b}\Big)\,du^2+dv^2\,,
$$
and the Gauss curvature is
$$
K=\frac{1}{(a+b)^2}-\frac{4a^2b^2}{(a+b)^2}\Big(a^2+b^2+(b^2-a^2)\,\sin\frac{2v}{a+b}\Big)^{-2}\,.
$$    
A straightforward computation using the metric $\mathfrak g$ and the curvature $K$ yields that
$$
\|\nabla K\|^2=(K_v)^2=\Big[1-(a+b)^2K\Big]^2\Big\{A_1+A_2\,[1-(a+b)^2K]^{1/2}+A_3\,[1-(a+b)^2K]\Big\}\,,
$$
where $A_1,A_2,A_3$ are real constants that can be written explicitly in terms of $a$ and $b$, but whose expressions are not relevant for our purposes. It can be checked that $A_2\neq 0$ for any values of $a$ and $b$, thus implying that $\|\nabla K\|^2$ is not a polynomial of $K$, and hence the surface $\Gamma_*$ cannot be an unduloid on account of Eq.~\eqref{transnormal Gaussian curvature}. 
\vskip.1cm
\noindent \emph{{\rm(2)} The catenoid.} The family of the catenoids can be parametrized using coordinates $(u,v)\in \mathbb R/(2\pi\mathbb Z)\times \mathbb R$ and a real constant $a>0$, cf.~\cite{MP2012univlecAMS} and~\cite{CM2011gradst121AMS}. In this coordinate system the induced metric and Gauss curvature are
$$
\mathfrak g=a^2\cosh^2\Big(\frac{v}{a}\Big)\,du^2+\cosh^2\Big(\frac{v}{a}\Big)\,dv^2\,, \quad
K=\frac{-1}{a^2\cosh^4(v/a)}\,.
$$
As before, after some computations the quantity $\|\nabla K\|^2$ can be written in terms of $K$ as
$$
\|\nabla K\|^2=\frac{(K_v)^2}{\cosh^2(v/a)}=\frac{16}{a}(-K)^{5/2}+16K^3\,,
$$
which is not a polynomial of $K$. Therefore, $\Gamma_*$ cannot be a catenoid.
\vskip.1cm
\noindent \emph{{\rm(3)} The helicoid.} The family of the helicoids can be parametrized with coordinates $(u,v)\in\mathbb R^2$ and a real constant $a>0$, see~\cite{MR2005gradst69AMS}. The induced metric and Gauss curvature read in these coordinates as
$$
\mathfrak g=(a^2+v^2)\,du^2+dv^2\,, \quad K=\frac{-a^2}{(a^2+v^2)^2}\,.
$$
Algebraic calculations again yield that $\|\nabla K\|^2$ is not a polynomial of $K$,  in fact,
$$
\|\nabla K\|^2=(K_v)^2=\frac{16}{a}(-K)^{5/2}+16K^3\,.
$$
Therefore, $\Gamma_*$ cannot be a helicoid either. 
\end{proofB}

\begin{remark} {\rm We can also show that, in the proof of Theorem \ref{th:uniformly dense},  if $K^{*}$ is not constant, then it is an isoparametric function, namely it satisfies the system of equations
$$
\Vert\nabla K^*\Vert^2=\Psi(K^*) \ \mbox{ and } \ \Delta_{\Gamma_{*}} K^{*} = \Phi(K^{*})\ \mbox{ on } \Gamma_{*},
$$
for some continuous function $\Phi$; here, $\Delta_{\Gamma_{*}}$ is the Laplace-Beltrami operator on $\Gamma_{*}$. In our case, $\Phi$ and $\Psi$ are polynomials. 
\par
In fact, the umbilical points of the surface $\Gamma_{*}$ of constant mean curvature are isolated (see \cite[Proposition 1.4 and (1.40), p. 21]{K2003transMathMono221} ), and by \cite[(1.41), p. 22]{K2003transMathMono221} 
$$
\Delta_{\Gamma_{*}} \log\sqrt{(H^*)^{2}-K^{*}} -2 K^{*} = 0\ \mbox{ on } \Gamma_{*} \setminus \{ \mbox{ umbilical points }\}.
$$
Therefore, it follows from the first equation of \eqref{transnormal Gaussian curvature} that
$$
\Delta_{\Gamma_{*}} K^{*} = -4K^{*}\,[(H^*)^{2}-K^{*}] - \frac 1{(H^*)^{2}-K^{*}}\,\Psi(K^{*}).
$$
Thus, the second equality of \eqref{transnormal Gaussian curvature} guarantees that the right-hand side of this equation is written as $\Phi(K^{*})$ for some polynomial  $\Phi = \Phi(t)$ in $t \in \mathbb R$.
}
\end{remark}

\setcounter{equation}{0}
\setcounter{theorem}{0}

\section{Uniformly dense domains in $\mathbb R^3$: the case $\Gamma=\partial\Omega$}
\label{section5}

With the aid of Nitsche's result \cite{N1995analysis}, the theory of embedded minimal surfaces of finite topology in $\mathbb R^3$ (\cite{BB2011cmhelv}, \cite{MR2005annals}, \cite{C1997annals}) gives the following generalization of \cite[Theorem 1.4, p. 4824]{MPS2006tams}:
\begin{theorem}
\label{th:finite topology uniformly dense minimal surface} 
Let $S$ be a complete embedded minimal surface of finite topology in $\mathbb R^3$, and let $\Omega$ be one connected component of $\mathbb R^3\setminus S$.

 If $\Omega$ is uniformly dense in $S\ (=\partial\Omega)$,
 then $S$ must be either a plane or a helicoid.
\end{theorem}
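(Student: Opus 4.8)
The plan is to reduce Theorem~\ref{th:finite topology uniformly dense minimal surface} to the structure theory of complete embedded minimal surfaces of finite topology in $\mathbb R^3$, using the uniform-density hypothesis --- through the $\Gamma=\partial\Omega$ (i.e. $R=0$) version of the asymptotic analysis of Sections~\ref{section2}--\ref{section3} --- only to rule out positive genus. First I would record the local consequences of the hypothesis. Since $S=\partial\Omega=\partial(\mathbb R^3\setminus\overline\Omega)$ and $\Omega$ is uniformly dense in $S$, the density $\rho(x,r)$ and every coefficient of its expansion as $r\downarrow 0$ are independent of $x\in S$. Expanding at a point of $S$ itself --- the $R=0$ counterpart of the computations carried out in \cite[proofs of Theorems~1.2--1.4]{MPS2006tams} --- the first non-trivial coefficient is a multiple of the mean curvature of $S$, which must therefore be constant (consistent with $S$ being minimal by hypothesis), and the subsequent ones, after using the minimal surface equation and the real analyticity of $S$, give a transnormal-type constraint on the Gauss curvature $K$ of $S$: namely, $K$ is a transnormal function on the complete connected Riemannian surface $S$ whenever $K$ is not constant. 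Nitsche's result \cite{N1995analysis} is what makes this extraction effective.

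Next comes the global part. By the theory of complete embedded minimal surfaces of finite topology in $\mathbb R^3$ (\cite{BB2011cmhelv}, \cite{MR2005annals}, \cite{C1997annals}), $S$ is properly embedded and has finitely many ends. If $S$ had two or more ends, Collin's theorem \cite[Theorem~2]{C1997annals} would give that $S$ has finite total curvature, and then \cite[Theorem~1.4]{MPS2006tams} would force $S$ to be a plane --- but the plane has a single end, a contradiction. So $S$ has exactly one end, hence by \cite{BB2011cmhelv} it has finite genus, is conformally a once-punctured compact Riemann surface, and is asymptotic to a helicoid. If $S$ has genus zero, then $S$ is simply connected, complete and properly embedded, and the Meeks--Rosenberg uniqueness theorem \cite[Theorem~0.1]{MR2005annals} shows that $S$ is a plane or a helicoid, and we are done. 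If $S$ has positive genus --- a ``genus-$g$ helicoid'' --- I would rule this out using the local step: if $K$ is constant, then $S$, being a complete minimal surface of constant Gauss curvature, is a plane (genus zero); and if $K$ is non-constant, then $K$ is a non-constant transnormal function on $S$, so by Miyaoka's classification of transnormal systems \cite[Theorem~1.1]{Mi2013DiffGeoAppl} $S$ is diffeomorphic to $\mathbb S^1\times\mathbb R$ or to $\mathbb R^2$, in both cases of genus zero, a contradiction. (This is the analogue, for $\Gamma=\partial\Omega$, of Case~(B) in the first proof of Theorem~\ref{th:uniformly dense}; alternatively, the positive-genus case can be excluded directly from \cite{N1995analysis}.) Therefore $S$ is a plane or a helicoid.

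The hard part will be the local step: turning the uniform-density hypothesis into the transnormal constraint on $K$ at a point of $S$ itself. This is the degenerate limit $R\to 0$ of Propositions~\ref{prop:asymptotics} and~\ref{prop:asymptotics-2} --- the intermediate constant-mean-curvature surface $\Gamma_*$ collapses onto $\partial\Omega$ and the leading coefficients of~\eqref{formula-asymptotics} blow up --- so the relevant Puiseux coefficients of the boundary curve $\partial B(x,r)\cap\partial\Omega$ have to be recomputed from scratch; one then must exploit the minimal surface equation to express the higher-order Taylor data of the local graph of $S$ in terms of $K$ and $\|\nabla K\|$, and keep track of the sign of the correction term, exactly the kind of computation performed in \cite{MPS2006tams} and, for the case $\Gamma\cap\partial\Omega=\varnothing$, in the Appendix of the present paper. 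The remaining pieces --- Collin's theorem, the Bernstein--Breiner conformal-structure result, the Meeks--Rosenberg theorem, Miyaoka's classification, and \cite[Theorem~1.4]{MPS2006tams} --- are then invoked off the shelf.
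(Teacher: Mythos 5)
Your global reductions are essentially sound and run parallel to the paper's: properness and finitely many ends (properness is due to Colding--Minicozzi \cite{CM2008annals}, not to the structure papers you cite, but that is minor), Collin plus \cite[Theorem 1.4]{MPS2006tams} when there are at least two ends, and Bernstein--Breiner plus Meeks--Rosenberg in the one-ended genus-zero case. The genuine gap is the step you yourself defer as ``the hard part'', which is exactly the step you need to kill the one-ended positive-genus case (the genus-$g$ helicoids): the claim that uniform density in $\Gamma=\partial\Omega$ yields, via the $r\downarrow 0$ expansion of $\rho(x,r)$ at points of $S$, a transnormal constraint $\|\nabla K\|^{2}=\Psi(K)$ on $S$. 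This is not a ``degenerate limit $R\to 0$'' of Propositions \ref{prop:asymptotics} and \ref{prop:asymptotics-2}, it is nowhere proved in your plan, and there is a structural obstruction to getting it this way: writing $\rho(x,r)=\tfrac12+\sum_k a_k(x)r^k$ for $x\in S$, the identity $\rho_{\Omega}(x,r)+\rho_{\mathbb R^3\setminus\overline{\Omega}}(x,r)=1$ forces every coefficient $a_k$ to be a curvature invariant that is \emph{odd} under reversal of the orientation of $S$ (change of sign of the second fundamental form and its derivatives). On a minimal surface the low-order odd invariants vanish identically --- this is precisely why \cite{MPS2006tams} extracts nothing beyond $H\equiv 0$ from this expansion and needs extra input --- and no identity between the \emph{even} invariants $\|\nabla K\|^{2}$ and $K$ can arise as ``$a_k=\mathrm{const}$''. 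Note also that any constraint extracted here must be satisfied by the helicoid, which bisects every ball centered on it (rotate by $\pi$ about the ruling through the center), so the polynomial structure of $\Psi$ that powers Section \ref{section4} is unavailable; Nitsche's result \cite{N1995analysis} identifies low-order coefficients (mean curvature and Cimmino's problem), it does not produce transnormality of $K$, so Miyaoka's theorem \cite{Mi2013DiffGeoAppl} has nothing to act on.

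The paper closes all cases, including the genus-$g$ helicoid possibility, by a different mechanism that your plan lacks: the bounded curvature of $S$ (from the structure theory) gives a uniform graph radius $\delta$, and interior estimates for the minimal surface equation upgrade the asymptotics of each end (planar, catenoidal or helicoidal) to $C^k$ local convergence; this produces a sequence of points $P_j\in S$ such that the principal curvatures on the components of $B_\delta(P_j)\cap S$ through $P_j$ tend to zero uniformly, and then \cite[Theorem 4.1]{MPS2006tams} --- the result that actually encodes Nitsche's theorem --- applies verbatim and finishes the proof, with no new $R=0$ expansion at all. Your parenthetical remark that the positive-genus case ``can be excluded directly from \cite{N1995analysis}'' gestures at this, but as written it is not an argument; to repair your proof, replace the transnormality step by this flat-sequence argument (which, as in the paper, also treats the two-or-more-ends and one-ended cases uniformly).
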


 \noindent
\begin{proof} First of all, we note that $S$ must be properly embedded in $\mathbb R^3$ by Colding and Minicozzi II \cite[Corollary 0.13, p. 214]{CM2008annals}, and hence $S$ separates $\mathbb R^3$ into two connected  components.

We shall use an argument similar to those used in  \cite[Proof of Theorem 1.4, p. 4833--4834]{MPS2006tams}. Since $S$ is of finite topology, 
 there exist a compact Riemann surface $M$ without boundary in $\mathbb R^{3}$ and a finite number of points $p_1, \dots, p_m \in M$ such that $S$ is homeomorphic to  $M \setminus \{ p_1, \dots, p_m\}$ and each end corresponds to each $p_{j}$. Then the structure theorem of 
Bernstein-Breiner \cite{BB2011cmhelv} (see also Meeks III-Rosenberg \cite{MR2005annals} and Collin \cite{C1997annals}) shows the following:
\begin{itemize}
\item[(i)]\ If $m \ge 2$, then $S$ has finite total curvature and each end of $S$ is asymptotic to either a plane or a half catenoid.  See \cite[Corollary 1.4, p. 357]{BB2011cmhelv} and \cite[Theorem 2, p. 2]{C1997annals};
\item[(ii)]\ If $m=1$, then either $S$ is a plane or it has infinite total curvature and its end is asymptotic to a helicoid. See \cite[Corollary 1.4, p. 357]{BB2011cmhelv} and \cite[Theorems 0.1 and 0.2, p. 728]{MR2005annals};
\item[(iii)]\ The Gauss curvature of $S$ is bounded, and hence the principal curvatures of $S$ are also bounded. See \cite[Proposition 1, p. 801]{Sc1983jdg} and \cite[Theorem 1, p. 1336]{HPR2001tams} together with \cite{BB2011cmhelv} and \cite{MR2005annals}.
\end{itemize}
See also \cite{MP2012univlecAMS} and \cite{CM2011gradst121AMS} for the minimal surface theory in $\mathbb R^3.$

Now, item (iii) above guarantees that there exists $\delta > 0$ such that, 
for every $x \in S$, the connected component of $B_{\delta}(x) \cap S$ containing $x$ is written as a graph of a function over the tangent plane to $S$ at $x$ (see \cite[Lemma 2.4, p. 74]{CM2011gradst121AMS} for a proof). Hence combining the above (iii) with the interior  estimates  for the minimal surface equation (see \cite[Corollary 16.7, p. 407]{GT1983springer}) yields that the convergence in (i) and (ii) is in the $C^k$ local topology for any $k \in \mathbb N$. 

Therefore, in view of the geometry of a hyperplane, a half catenoid, and a helicoid, each of (i) and (ii) gives a sequence of points $\{ P_j\}$ in $S$ such that the principal curvatures of the connected component of $B_{\delta}(P_j) \cap S$ containing $P_j$ tend to zero uniformly as $j \to \infty$.  Thus we can apply \cite[Theorem 4.1, p. 4833]{MPS2006tams}, which uses Nitsche's result \cite{N1995analysis},  to complete the proof of Theorem \ref{th:finite topology uniformly dense minimal surface}. \end{proof}

In terms of stationary isothermic surfaces, and using~\cite[Theorems 1.1 and 1.3]{MPS2006tams}, Theorem~\ref{th:finite topology uniformly dense minimal surface} implies the following corollary:

\begin{corollary}
Let $\Omega$ be a domain in $\mathbb R^3$ whose boundary $\partial\Omega$ is an unbounded complete embedded surface. Assume that $\partial\Omega$ has finite topology and is a stationary isothermic surface of the solution $u$ of the Cauchy problem~\eqref{cauchy}. Then $\partial\Omega$ must be a plane, a circular cylinder or a helicoid. 
\end{corollary}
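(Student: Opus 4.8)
The plan is to obtain this corollary as an assembly of three facts, two of which are external and one of which is the main theorem of the present section. The logical skeleton is: stationary isothermic $\Rightarrow$ uniformly dense; then the $N=3$ trichotomy of \cite[Theorem 1.3]{MPS2006tams} reduces us to a cylinder or a minimal surface; and finally Theorem \ref{th:finite topology uniformly dense minimal surface} disposes of the minimal-surface case.

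First I would translate the isothermic hypothesis into a density statement. By the representation formula \eqref{representation of the solution of the Cauchy problem} together with the argument behind \cite[Theorem 1.1]{MPS2006tams} (recalled in the introduction), the assertion that $\partial\Omega$ is a stationary isothermic surface of the solution $u$ of \eqref{cauchy} is equivalent to saying that $\Omega$ is uniformly dense in $\Gamma=\partial\Omega$ with $r_0=\infty$. In particular $\Omega$ is uniformly dense in $\partial\Omega$, and since $\partial\Omega$ is assumed to be an embedded (hence connected) surface, we are precisely in the configuration $\Gamma\subset\partial\Omega$ with $\Gamma=\partial\Omega$.

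Next I would invoke \cite[Theorem 1.3]{MPS2006tams}: in dimension $N=3$, with $\partial\Omega$ connected, uniform density of $\Omega$ in $\partial\Omega$ forces $\partial\Omega$ to be a sphere, a circular cylinder, or a minimal surface. The unboundedness hypothesis in the corollary immediately rules out the sphere, leaving only two possibilities. If $\partial\Omega$ is a circular cylinder we are already done. It remains to handle the minimal-surface alternative, and this is the only point where the new machinery is needed: in that case $\partial\Omega$ is a complete embedded minimal surface of finite topology in $\mathbb R^3$ that is uniformly dense in $S=\partial\Omega$, so Theorem \ref{th:finite topology uniformly dense minimal surface} applies verbatim and yields that $\partial\Omega$ is either a plane or a helicoid. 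Combining the two surviving branches of the trichotomy with this dichotomy gives exactly that $\partial\Omega$ is a plane, a circular cylinder, or a helicoid.

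I do not expect a genuine obstacle at the level of this corollary, since the substantive work is already carried out in Theorem \ref{th:finite topology uniformly dense minimal surface}. The only points that need checking are bookkeeping: that the finite-topology hypothesis of the corollary transfers to the minimal-surface subcase (it does, since it is a hypothesis on $\partial\Omega$ itself and $\partial\Omega=S$ there), that the connectedness required by \cite[Theorem 1.3]{MPS2006tams} is subsumed in ``$\partial\Omega$ is an embedded surface,'' and that the passage from the isothermic condition to uniform density is applied with $r_0=\infty$ so that \cite[Theorem 1.1]{MPS2006tams} is in force. With these verifications the proof is complete.
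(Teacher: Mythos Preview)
Your proposal is correct and follows precisely the approach indicated by the paper, which merely states that the corollary follows from combining \cite[Theorems 1.1 and 1.3]{MPS2006tams} with Theorem~\ref{th:finite topology uniformly dense minimal surface}; you have supplied the straightforward details of that combination. The only minor remark is that ``embedded'' does not by itself entail ``connected,'' but in the paper's usage ``a surface'' is implicitly connected, so your bookkeeping observation is adequate.
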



\setcounter{equation}{0}
\setcounter{theorem}{0}

\def\theequation{A.\arabic{equation}}
\def\thetheorem{A.\arabic{theorem}}

\appendix
\section*{Appendix}
\label{appendix}
The following list of definite integrals will be used in the calculations of Lemmas~\ref{le: the first four terms in 3rd}--\ref{le: the sum of the last two terms in 3rd in terms of c and K}.
They easily follow by means of successive differentiations and algebraic manipulations of the formula:
\begin{equation}
\label{starting point}
\frac1{2\pi}\,\int_0^{2\pi} \frac{d\theta} {\sigma_1\cos^2\theta + \sigma_2\sin^2\theta}= \sigma_1^{-\frac12}\sigma_2^{-\frac12};
\end{equation}
here, $\sigma_1$ and $\sigma_2$ are two positive parameters. For $0\le j\le m$ and $m=0,1,\dots$ we have:
\begin{equation}
\label{mj}
\frac1{2\pi}\,\int_0^{2\pi} \frac{(\cos\theta)^{2j} (\sin\theta)^{2m-2j}}{(\sigma_1\cos^2\theta + \sigma_2\sin^2\theta)^{m+1}} d\theta= \frac1{2^{2m}}\frac{(2j)! (2m-2j)!}{m! j! (m-j)!}\, \sigma_1^{-\frac12-j}\sigma_2^{-\frac12-(m-j)};
\end{equation}
for $0\le j\le m$ and $m=0,1,\dots$;
\begin{equation}
\label{m+1}
\frac1{2\pi}\,\int_0^{2\pi} \frac{d\theta}{(\sigma_1\cos^2\theta + \sigma_2\sin^2\theta)^{m+1}} = \frac1{2^{2m}}\,\sum_{j=0}^m {2j\choose j} {2(m-j)\choose m-j}\,\sigma_1^{-\frac12-j}\sigma_2^{-\frac12-(m-j)};
\end{equation}
\begin{multline}
\label{m+1 - bis1}
\frac1{2\pi}\,\int_0^{2\pi} \frac{\cos^2\theta\,d\theta}{(\sigma_1\cos^2\theta + \sigma_2\sin^2\theta)^{m+2}} = \\
\frac1{2^{2m+1}}\,\sum_{j=0}^m \frac{2j+1}{m+1}{2j\choose j} {2(m-j)\choose m-j}\,\sigma_1^{-\frac12-j-1}\sigma_2^{-\frac12-(m-j)}
\end{multline}
and
\begin{multline}
\label{m+1 - bis2}
\frac1{2\pi}\,\int_0^{2\pi} \frac{\sin^2\theta\,d\theta}{(\sigma_1\cos^2\theta + \sigma_2\sin^2\theta)^{m+2}} = \\
\frac1{2^{2m+1}}\,\sum_{j=0}^m \frac{2m-2j+1}{m+1}{2j\choose j} {2(m-j)\choose m-j}\,\sigma_1^{-\frac12-j}\sigma_2^{-\frac12-(m-j+1)}.
\end{multline}

In this paper we  use \eqref{mj} for $1\le m\le 3$, \eqref{m+1} for $1 \le m \le 2$, \eqref{m+1 - bis1} and \eqref{m+1 - bis2} for $m=1$, respectively.
In the sequel, we  set $\kappa_j = \kappa_j(\xi)$, for $j = 1, 2$, and abbreviate the  
partial derivatives of $\varphi$ with respect to $z_1$ and $z_2$ by subscripts; whenever it is needed, we shall specify their arguments: the varying point $z=(z_1, z_2)$ or the origin $(0,0)$.

The following two lemmas are preparatory for Lemmas~\ref{le: the first four terms in 3rd in terms of c and K} and~\ref{le: the sum of the last two terms in 3rd in terms of c and K} below.


\begin{lemma} The following formulas hold:
\label{le: the first four terms in 3rd}

$$
-\frac{2\,c^\frac32}{\pi}\int_0^{2\pi}\frac {P_2(v)\, d\theta}{(\sigma_1\cos^2\theta + \sigma_2\sin^2\theta)^2}=\kappa_1\,\sigma_2+ \kappa_2\,\sigma_1;
$$

$$
-\frac{8\,c^\frac52}{\pi}\,\int_0^{2\pi}\frac {P_2(v)\, d\theta}{(\sigma_1\cos^2\theta + \sigma_2\sin^2\theta)^3}= \kappa_1\, \bigl(3\sigma_2^2+ \sigma_1\sigma_2\bigr)
+\kappa_2\, \bigl(\sigma_1\sigma_2+ 3\sigma_1^2\bigr);
$$
\begin{multline*}
 \frac{2^5 3^2 c^\frac72}{\pi}\int_0^{2\pi}\frac {[P_3(v)]^2\,d\theta}{(\sigma_1\cos^2\theta + \sigma_2\sin^2\theta)^4} = \\
5\,(\varphi_{111})^2 \sigma_2^{3} + 9\, (\varphi_{112})^2 \sigma_1\sigma_2^{2}
+ 
9\, (\varphi_{122})^2 \sigma_1^{2}\sigma_2
+ 5\, (\varphi_{222})^2 \sigma_1^{3}+ \\
6\,(\varphi_{111})(\varphi_{122})\, \sigma_1\sigma_2^2+ 
6\,(\varphi_{112})(\varphi_{222})\, \sigma_1^{2}\sigma_2;
\end{multline*}
$$
\frac{2^5\,c^{\frac52}}{\pi}\,\int_0^{2\pi}\frac {P_4(v)\,d\theta }{(\sigma_1\cos^2\theta + \sigma_2\sin^2\theta)^3} =(\varphi_{1111})\,\sigma_2^{2}+ 2\,(\varphi_{1122})\,\sigma_1\sigma_2
+ (\varphi_{2222})\,\sigma_1^{2}.
$$
Here, we mean that the derivatives of $\varphi$ are evaluated at $(0,0)$.
\end{lemma}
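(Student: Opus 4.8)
\emph{Proof strategy.} The plan is to reduce each of the four identities to the elementary integral formulas collected in the Appendix, chiefly \eqref{mj}. The starting point is the explicit form, in the chosen principal coordinates, of the homogeneous Taylor polynomials of $\varphi$ at the origin: by \eqref{quadratic term} one has $P_2(v)=-\tfrac12(\kappa_1\cos^2\theta+\kappa_2\sin^2\theta)$, while $P_3(v)=\tfrac16\big(\varphi_{111}\cos^3\theta+3\varphi_{112}\cos^2\theta\sin\theta+3\varphi_{122}\cos\theta\sin^2\theta+\varphi_{222}\sin^3\theta\big)$ and $P_4(v)=\tfrac1{24}\big(\varphi_{1111}\cos^4\theta+4\varphi_{1112}\cos^3\theta\sin\theta+6\varphi_{1122}\cos^2\theta\sin^2\theta+4\varphi_{1222}\cos\theta\sin^3\theta+\varphi_{2222}\sin^4\theta\big)$, all derivatives being evaluated at $(0,0)$; these are exactly the numerators that occur in \eqref{3rd coefficient}.

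First I would insert these expressions into the four integrands and expand each numerator as a linear combination of monomials $\cos^{a}\theta\,\sin^{b}\theta$; for the third identity this requires squaring the cubic $P_3(v)$, which also produces the mixed products $\varphi_{111}\varphi_{122}$ and $\varphi_{112}\varphi_{222}$. Every monomial containing an odd power of $\cos\theta$ or of $\sin\theta$ integrates to zero over $[0,2\pi)$, by the symmetries $\theta\mapsto-\theta$ and $\theta\mapsto\pi-\theta$; this is precisely why only the six displayed terms survive in the $[P_3(v)]^2$ identity and only three in the $P_4(v)$ identity, the derivatives $\varphi_{1112}$ and $\varphi_{1222}$ dropping out entirely. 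For the surviving monomials $\cos^{2j}\theta\,\sin^{2(m-j)}\theta$ I would then apply \eqref{mj} with $m$ equal to the exponent in the denominator minus one — so $m=1$ for the first identity, $m=2$ for the second and the fourth, and $m=3$ for the third — writing each integral as an explicit rational multiple of $\sigma_1^{-1/2-j}\sigma_2^{-1/2-(m-j)}$.

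Finally I would sum the resulting contributions, factor out the overall power $\sigma_1^{-1/2}\sigma_2^{-1/2}$ together with the appropriate further negative symmetric power of $\sigma_1\sigma_2$, and invoke the Monge--Amp\`ere relation $\sigma_1\sigma_2=c$ from \eqref{monge_ampere constant c} to turn that symmetric power into the prefactors $c^{3/2},c^{5/2},c^{7/2},c^{5/2}$ appearing in the statement; what is left over is a polynomial in $\kappa_1,\kappa_2$ (respectively in the third- or fourth-order derivatives of $\varphi$) and in $\sigma_1,\sigma_2$, matching the right-hand sides after collecting terms. A useful consistency check throughout is that each identity must be invariant under the simultaneous exchange $\sigma_1\leftrightarrow\sigma_2$, $\kappa_1\leftrightarrow\kappa_2$, $z_1\leftrightarrow z_2$.

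The argument is entirely mechanical, so the only real difficulty is bookkeeping: correctly carrying the multinomial factors $\tfrac1{2^{2m}}\tfrac{(2j)!\,(2m-2j)!}{m!\,j!\,(m-j)!}$ of \eqref{mj} through the several monomials and, above all, not losing any cross term when expanding $[P_3(v)]^2$. I expect no conceptual obstacle beyond careful algebra, and the same scheme will be reused, together with the integrals \eqref{m+1}--\eqref{m+1 - bis2}, in the lemmas that follow.
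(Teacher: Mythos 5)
Your proposal is correct and follows essentially the same route as the paper: expand $P_2,P_3,P_4$ (equivalently $6P_3(v)=(\cos\theta\,\partial_1+\sin\theta\,\partial_2)^3\varphi$, $24P_4(v)=(\cos\theta\,\partial_1+\sin\theta\,\partial_2)^4\varphi$), discard the monomials odd in $\cos\theta$ or $\sin\theta$, integrate via \eqref{mj}, and convert $\sigma_1\sigma_2$ into powers of $c$ by \eqref{monge_ampere constant c}. The only cosmetic difference is the second identity, where the paper invokes \eqref{m+1 - bis1}--\eqref{m+1 - bis2} with $m=1$, whereas your use of \eqref{mj} with $m=2$ requires first padding the degree-two numerator by a factor $\cos^2\theta+\sin^2\theta=1$ --- an equivalent, trivially justified step.
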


\begin{proof} Since $-2\,P_2(v) =\kappa_1\,\cos^2\theta + \kappa_2\,\sin^2\theta$,  with the aid of \eqref{monge_ampere constant c},
the first formula  follows from \eqref{mj} for $m=1$, and the second one follows from \eqref{m+1 - bis1} and \eqref{m+1 - bis2} for $m=1$.


Observe that 
\begin{multline*}
36\,[P_3(v)]^{2} = \left\{\left(\cos\theta\,\partial_1+ \sin\theta\,\partial_2\right)^{3}\varphi\right\}^{2}
\\
= (\varphi_{111})^{2}\cos^6\theta + 9\,(\varphi_{112})^2 \cos^4\theta\sin^{2}\theta
+ 9\,(\varphi_{122})^2\cos^2\theta\sin^{4}\theta
+ (\varphi_{222})^{2}\sin^6\theta
\\
+ 6\,(\varphi_{111}) (\varphi_{122}) \cos^{4}\theta\sin^{2}\theta
+ 6\,(\varphi_{112})(\varphi_{222}) \cos^{2}\theta\sin^{4}\theta
\\
+ \left[\mbox{ the sum of odd functions of either $\cos\theta$ or $\sin\theta$ } \right],
\end{multline*}
and
\begin{multline*}
24\,P_4(v) =  \left(\cos\theta\,\partial_1 + \sin\theta\,\partial_2\right)^{4}\!\varphi
\\
=(\varphi_{1111}) \cos^4\theta  +
6\,(\varphi_{1122}) \cos^{2}\theta\sin^{2}\theta 
+(\varphi_{2222}) \sin^4\theta 
\\
+ \left[\mbox{ the sum of odd functions of either $\cos\theta$ or $\sin\theta$ } \right].
\end{multline*}
Then, with the aid of \eqref{monge_ampere constant c},  the third and fourth formulas follow  from \eqref{mj} with $m=3$ and $m=2$ respectively.
\end{proof}


\begin{lemma} 
\label{le:phiderivatives}
Let $\varphi$ be the function representing $\partial\Omega$ locally as in {\rm Proposition \ref{prop:asymptotics}}. If $\Omega$ is uniformly dense in $\Gamma$, then
\begin{equation}
\label{3rd original derivatives at 0-1}
\sigma_2\varphi_{111} + \sigma_1 \varphi_{122}= 0, \quad \sigma_1\varphi_{222} + \sigma_2 \varphi_{112} = 0,
\end{equation}
\begin{multline}
\sigma_1^{-1}\varphi_{1111}  + \sigma_2^{-1}\varphi_{1122}  = -\frac{2R}{c}\left\{ \varphi_{111}\varphi_{122}-(\varphi_{112})^2 \right\}
\\
+ \frac 1{Rc}\left[ 4(c-1)\kappa_1^2 + R\kappa_1^2(\kappa_1+3\kappa_2)\right],
\label{4th derivatives at 0-1}
\end{multline}
and
\begin{multline}
\sigma_2^{-1}\varphi_{2222}+ \sigma_1^{-1}\varphi_{1122}=-\frac{2R}{c}\left\{ \varphi_{222}\varphi_{112}-(\varphi_{122})^2 \right\}
\\
+ \frac 1{Rc}\left[ 4(c-1)\kappa_2^2 + R\kappa_2^2(\kappa_2+3\kappa_1)\right].
\label{4th derivatives at 0-2}
\end{multline}
Here, $\sigma_1$ and $\sigma_2$ are given by \eqref{sigma_j} and the derivatives of $\varphi$
are evaluated at $(0,0)$. 
\end{lemma}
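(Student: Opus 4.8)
The plan is to derive all four identities from a single ingredient: the Monge--Amp\`ere relation~\eqref{monge-ampere} of Lemma~\ref{le:regularity}, which holds at every point of $\partial\Omega$ and is the only place where the hypothesis that $\Omega$ is uniformly dense in $\Gamma$ enters. Near $\xi$ we work with the principal-coordinate graph $z_3=\varphi(z_1,z_2)$ fixed before Proposition~\ref{prop:asymptotics}, so that $\varphi_1(0)=\varphi_2(0)=\varphi_{12}(0)=0$ and $\varphi_{11}(0)=-\kappa_1$, $\varphi_{22}(0)=-\kappa_2$; recall also, from the Monge expansion in the proof of Proposition~\ref{prop:asymptotics-2}, that $\varphi_{111},\varphi_{112},\varphi_{122},\varphi_{222}$ at $0$ are, up to sign, the first partial derivatives of the principal curvatures. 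The left-hand side $\prod_j(1-R\kappa_j)$ of~\eqref{monge-ampere} equals $\det(I-RS)$, $S$ the shape operator, and for a graph it is the explicit function of the first and second derivatives of $\varphi$
\[
F:=\frac1{W^2}\,\det\!\left(\delta_{ij}+\varphi_i\varphi_j+\frac{R\,\varphi_{ij}}{W}\right),\qquad W=\sqrt{1+\varphi_1^2+\varphi_2^2},
\]
which at the origin equals $(1-R\kappa_1)(1-R\kappa_2)=\sigma_1\sigma_2=c$ by~\eqref{sigma_j} and~\eqref{monge_ampere constant c}. The key point is that, by~\eqref{monge-ampere}, the composition of $F$ with the graph parametrisation is the \emph{constant} $c$ in a neighbourhood of the origin, so all of its partial $z$-derivatives vanish identically there; in particular no covariant derivatives or Christoffel symbols are needed.

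First I would differentiate $F$ once. Writing $\partial_{z_k}F=\sum_\alpha F_\alpha\,\partial_{z_k}(\text{$\alpha$-th argument})$, with $\alpha$ ranging over $\varphi_1,\varphi_2,\varphi_{11},\varphi_{12},\varphi_{22}$, and evaluating at the origin (where $\varphi_1=\varphi_2=0$ and $W=1$), one finds $F_{\varphi_{11}}=R\sigma_2$, $F_{\varphi_{22}}=R\sigma_1$, while $F_{\varphi_1}=F_{\varphi_2}=F_{\varphi_{12}}=0$ there. Since $\partial_{z_1}\varphi_{11}|_0=\varphi_{111}$ and $\partial_{z_1}\varphi_{22}|_0=\varphi_{122}$, the equation $\partial_{z_1}F|_0=0$ becomes $R\sigma_2\varphi_{111}+R\sigma_1\varphi_{122}=0$, which is the first relation in~\eqref{3rd original derivatives at 0-1}; using $\partial_{z_2}$ instead gives the second.

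Next I would differentiate $F$ twice. Expanding $\partial_{z_1}^2F|_0=0$ via the chain rule produces a quadratic form in the first $z_1$-derivatives of the five arguments plus a linear term in their second $z_1$-derivatives. Because $\varphi_1,\varphi_2,\varphi_{12}$ vanish at $0$ and $W=1$ there, the only nonzero Hessian entries of $F$ at the origin turn out to be
\[
F_{\varphi_{11}\varphi_{22}}=R^2,\qquad F_{\varphi_{12}\varphi_{12}}=-2R^2,\qquad F_{\varphi_1\varphi_1}=\sigma_1+3\sigma_2-4c,
\]
and the linear term collapses to $R\sigma_2\varphi_{1111}+R\sigma_1\varphi_{1122}$ (the would-be $\varphi_{1112}$ contribution drops because $F_{\varphi_{12}}|_0=0$, and the $\varphi_{111},\varphi_{112}$ contributions drop because $F_{\varphi_1}=F_{\varphi_2}=0$). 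Using $\partial_{z_1}\varphi_1|_0=\varphi_{11}(0)=-\kappa_1$ in the quadratic form, $\partial_{z_1}^2F|_0=0$ reads
\[
R\sigma_2\varphi_{1111}+R\sigma_1\varphi_{1122}=-(\sigma_1+3\sigma_2-4c)\,\kappa_1^2-2R^2\varphi_{111}\varphi_{122}+2R^2(\varphi_{112})^2.
\]
Dividing by $R\sigma_1\sigma_2=Rc$ and rewriting $-(\sigma_1+3\sigma_2-4c)=4(c-1)+R(\kappa_1+3\kappa_2)$ by means of $R\kappa_j=1-\sigma_j$ gives exactly~\eqref{4th derivatives at 0-1}; the same computation with $\partial_{z_2}^2F|_0=0$, with the roles of the indices $1$ and $2$ exchanged, yields~\eqref{4th derivatives at 0-2}.

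The main obstacle is purely computational: pinning down the values at the origin of the second partial derivatives $F_{\alpha\beta}$ of the determinant-and-$W$ expression, which forces one to keep careful track of which derivatives of $W$ and of the off-diagonal entry $\varphi_1\varphi_2+R\varphi_{12}/W$ are nonzero at $0$. One must also fix the sign conventions with care --- the orientation of $\nu$, the resulting sign $h_{ij}=-\varphi_{ij}/W$ in the second fundamental form, and the normalisation $\varphi_{ij}(0)=-\kappa_i\delta_{ij}$ --- so that $F(0)$ comes out as $\sigma_1\sigma_2=c$ rather than a sign variant or its reciprocal. Once these are settled, everything else is routine differentiation of an explicit rational-algebraic function.
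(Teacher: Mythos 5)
Your proposal is correct and follows essentially the same route as the paper: the paper also uses only the Monge--Amp\`ere relation \eqref{monge-ampere}, rewritten as the graph identity \eqref{monge-ampere graph eq} (your $F\equiv c$ with denominators cleared), and differentiates it once and twice in $z_1$ and $z_2$, evaluating at the origin with $\varphi_1=\varphi_2=\varphi_{12}=0$, $\varphi_{jj}=-\kappa_j$. Your chain-rule bookkeeping (the values $F_{\varphi_{11}}=R\sigma_2$, $F_{\varphi_{22}}=R\sigma_1$, $F_{\varphi_{11}\varphi_{22}}=R^2$, $F_{\varphi_{12}\varphi_{12}}=-2R^2$, $F_{\varphi_1\varphi_1}=\sigma_1+3\sigma_2-4c$, together with $-(\sigma_1+3\sigma_2-4c)=4(c-1)+R(\kappa_1+3\kappa_2)$) checks out and reproduces \eqref{3rd original derivatives at 0-1}--\eqref{4th derivatives at 0-2} exactly.
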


\begin{proof}
Since \eqref{monge-ampere} gives
\begin{equation}
\label{Gaussian and Mean Curvatures}
 - R(\kappa_1 + \kappa_2) + R^2\kappa_1\kappa_2 = c -1,
\end{equation}
the function $\varphi(z_1,z_2)$ satisfies the partial differential equation:
\begin{multline}
R\sqrt{1+\varphi_1^2+\varphi_2^2}\left\{ (1+\varphi_2^2)\varphi_{11} - 2\varphi_1\varphi_2\varphi_{12} + (1 + \varphi_1^2)\varphi_{22}\right\} + \\
R^2 \left\{ \varphi_{11}\varphi_{22} - (\varphi_{12})^2\right\} 
= (c-1)( 1+\varphi_1^2 + \varphi_2^2 )^2,
\label{monge-ampere graph eq}
\end{multline}
for $z$ in a neighborhood of $(0,0)$.


Recall that at $(0,0)$
\begin{equation}
\label{values of varphi at zero}
\varphi_1= \varphi_2= \varphi_{12}= 0\ \mbox{ and } \varphi_{jj}= -\kappa_j(\xi)\mbox{ for } j = 1, 2.
\end{equation}
By differentiating \eqref{monge-ampere graph eq} with respect to $z_1$, we obtain
\begin{multline}
R\frac{\varphi_1\varphi_{11} + \varphi_2\varphi_{12}}{\sqrt{1+\varphi_1^2+\varphi_2^2}}\left\{ (1+\varphi_2^2)\varphi_{11} - 2\varphi_1\varphi_2\varphi_{12}  + (1 + \varphi_1^2)\varphi_{22}\right\}
\\
+R\sqrt{1+\varphi_1^2+\varphi_2^2}\left\{2\varphi_2\varphi_{12}\varphi_{11} +  (1+\varphi_2^2)\varphi_{111} - 2\varphi_{11}\varphi_2\varphi_{12} - 2\varphi_1 (\varphi_{12})^2 - 2\varphi_1\varphi_2\varphi_{112} \right.
\\
\left.+2\varphi_1\varphi_{11}\varphi_{22} +  (1 + \varphi_1^2)\varphi_{122}\right\}
+R^2(\varphi_{111}\varphi_{22} + \varphi_{11}\varphi_{122} -2\varphi_{12}\varphi_{112} )
\\
= 4(c-1) (1+\varphi_1^2+\varphi_2^2)(\varphi_1\varphi_{11} +  \varphi_2\varphi_{12})\label{first derivative in z1}
\end{multline}
Letting $z = (0,0)$ 
in \eqref{first derivative in z1} 
yields, in view of \eqref{values of varphi at zero}, that
$$
R\,(\varphi_{111}+ \varphi_{122}) -R^2\,(\kappa_2\,\varphi_{111} + \kappa_1\,\varphi_{122})= 0,
$$
and hence the first formula in \eqref{3rd original derivatives at 0-1}.
By differentiating \eqref{monge-ampere graph eq} with respect to $z_2$,
a similar calculation gives the second formula in \eqref{3rd original derivatives at 0-1}.

Again, by differentiating \eqref{first derivative in z1} with respect to $z_1$, then letting $z = (0,0)$ in the resulting equation, and using  \eqref{values of varphi at zero}, we get
\begin{multline*}
-R\,\kappa_1^2(\kappa_1+\kappa_2) + R\left\{\varphi_{1111} -2\kappa_1^2 \kappa_2+ \varphi_{1122}\right\} \\
+R^2\left\{ -\kappa_2\,\varphi_{1111}+ 2\varphi_{111} \varphi_{122} -\kappa_1\,\varphi_{1122}  -2\left(\varphi_{112}\right)^2\right\}
= 4(c-1)\kappa_1^2.
\end{multline*}
Hence,  with the aid of \eqref{sigma_j} and \eqref{monge_ampere constant c},
we obtain \eqref{4th derivatives at 0-1}.
By differentiating \eqref{monge-ampere graph eq} twice with respect to $z_2$ and then letting $z = (0,0)$ in the resulting equation, similar calculations yield \eqref{4th derivatives at 0-2}.
\end{proof}

\vskip.3cm

We now complete the computation of the coefficient of $s^2$ in \eqref{formula-asymptotics};
we must integrate over $[0,2\pi]$ the function in \eqref{3rd coefficient}. 

In view of \eqref{sigma_j}, \eqref{monge_ampere constant c}, \eqref{Gaussian and Mean Curvatures}, we preliminarily note that
\begin{equation}
\label{use c and K}
\sigma_1+\sigma_2= 1+c-R^2K \ \mbox{ and } \kappa_1+\kappa_2= \frac {1-c}R + RK.
\end{equation}

In the following lemma, we use \eqref{m+1} for $m=1, 2$ and the first two formulas in Lemma \ref{le: the first four terms in 3rd};  by using also  \eqref{use c and K},  \eqref{monge_ampere constant c} and some algebraic manipulations, we obtain the integrals of the first four terms in \eqref{3rd coefficient}. 


\begin{lemma} The following formulas hold:
\label{le: the first four terms in 3rd in terms of c and K}
$$
- \int_0^{2\pi} \frac {7\, d\theta}{6R^2(\sigma_1\cos^2\theta + \sigma_2\sin^2\theta)^2} =   \frac{7\pi}{6 R^2 c^{3/2}}\,(R^2K-1-c) ,
$$
$$
\int_0^{2\pi}\frac{d\theta}{6R^2(\sigma_1\cos^2\theta + \sigma_2\sin^2\theta)^3} = \frac{\pi}{24 R^2 c^{5/2}}\,\Bigl[3 R^4K^2- 6(1+c)R^2 K+3c^2+2c+3\Bigr],
$$
$$
 - \int_0^{2\pi}\frac {4P_2(v)\,d\theta}{R(\sigma_1\cos^2\theta + \sigma_2\sin^2\theta)^2} =  -\frac{2\pi}{R^2c^{3/2}}\, (R^2 K + c-1),
$$
$$
\int_0^{2\pi}\frac {4P_2(v)\,d\theta}{3R(\sigma_1\cos^2\theta + \sigma_2\sin^2\theta)^3} = - \frac{\pi}{6 R^2 c^{5/2}}\,\Bigl[3R^4K^2-2(3+c) R^2 K-(c^2+2c-3)\Bigr].
$$
\end{lemma}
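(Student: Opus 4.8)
\emph{Plan of proof.} Each of the four identities is obtained by evaluating its integral via a formula already on record and then rewriting the outcome symmetrically in $c=\sigma_1\sigma_2$ and $K=\kappa_1\kappa_2$. The two integrals whose integrands do not contain $P_2(v)$ are evaluated with \eqref{m+1}, and the two that do contain $P_2(v)$ with the first two formulas of Lemma~\ref{le: the first four terms in 3rd}. Throughout, the conversion to $c$ and $K$ uses $\sigma_j=1-R\kappa_j$ from \eqref{sigma_j}, the relation $\sigma_1\sigma_2=c$ from \eqref{monge_ampere constant c}, and the identities $\sigma_1+\sigma_2=1+c-R^2K$ and $\kappa_1+\kappa_2=\tfrac{1-c}{R}+RK$ from \eqref{use c and K}.

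For the first identity I would apply \eqref{m+1} with $m=1$; its right-hand side is $\tfrac12\bigl(\sigma_1^{-1/2}\sigma_2^{-3/2}+\sigma_1^{-3/2}\sigma_2^{-1/2}\bigr)$, which I factor as $(\sigma_1\sigma_2)^{-1/2}\cdot\tfrac12(\sigma_1^{-1}+\sigma_2^{-1})=\dfrac{\sigma_1+\sigma_2}{2c^{3/2}}$, then substitute $\sigma_1+\sigma_2=1+c-R^2K$ and multiply by the scalar factor $-\tfrac{7}{6R^2}\cdot 2\pi$. For the second identity I would apply \eqref{m+1} with $m=2$; collecting the three terms, the only non-obvious symmetric combination is $\sigma_1^{-2}+\sigma_2^{-2}=\bigl((\sigma_1+\sigma_2)^2-2c\bigr)/c^2$, so substituting $\sigma_1+\sigma_2=1+c-R^2K$, expanding $(1+c-R^2K)^2$, and multiplying by $\tfrac{2\pi}{6R^2}$ gives exactly $\tfrac{\pi}{24R^2c^{5/2}}\bigl[3R^4K^2-6(1+c)R^2K+3c^2+2c+3\bigr]$.

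For the third and fourth identities the integration is already carried out in Lemma~\ref{le: the first four terms in 3rd}, so only the algebraic reduction remains. Using $\sigma_j=1-R\kappa_j$ I would expand $\kappa_1\sigma_2+\kappa_2\sigma_1=(\kappa_1+\kappa_2)-2RK$ and $\kappa_1\sigma_2^2+\kappa_2\sigma_1^2=(\kappa_1+\kappa_2)(1+R^2K)-4RK$, and then use $\sigma_1\sigma_2=c$ to write $\kappa_1(3\sigma_2^2+\sigma_1\sigma_2)+\kappa_2(\sigma_1\sigma_2+3\sigma_1^2)=c(\kappa_1+\kappa_2)+3(\kappa_1\sigma_2^2+\kappa_2\sigma_1^2)$. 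Substituting $\kappa_1+\kappa_2=\tfrac{1-c}{R}+RK$ and clearing the common factor $1/R$ yields $\tfrac{1-c-R^2K}{R}$ in the first case and $\tfrac{1}{R}\bigl[(1-c+R^2K)(c+3+3R^2K)-12R^2K\bigr]=\tfrac{1}{R}\bigl[3R^4K^2-2(3+c)R^2K-(c^2+2c-3)\bigr]$ in the second. Dividing by the prefactors $-\tfrac{\pi}{2c^{3/2}}$ and $-\tfrac{\pi}{8c^{5/2}}$ obtained by inverting Lemma~\ref{le: the first four terms in 3rd}, and multiplying by $-\tfrac{4}{R}$ and $\tfrac{4}{3R}$ respectively, produces the two stated formulas.

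There is no conceptual obstacle here: every integral is disposed of by a previously recorded identity and the rest is bookkeeping. The point where care is genuinely needed is the symmetrization with half-integer exponents --- in each term one must first peel off the correct power of $(\sigma_1\sigma_2)^{1/2}=c^{1/2}$ so that the surviving \emph{integer} powers of $\sigma_1,\sigma_2$ can be rewritten through $\sigma_1+\sigma_2$ --- and I expect the most error-prone step to be the simplification of the fourth formula, where $\kappa_1\sigma_2^2+\kappa_2\sigma_1^2$ must be assembled after substitution and the product $(1-c+R^2K)(c+3+3R^2K)$ expanded and combined with $-12R^2K$ correctly. As a sanity check I would specialize to a plane ($c=1$, $\kappa_1=\kappa_2=K=0$, hence $\sigma_1=\sigma_2=1$ and $P_2(v)\equiv 0$), where all four integrals are trivial, and verify that each formula reduces correctly.
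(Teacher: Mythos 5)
Your proposal is correct and follows exactly the paper's route: the paper likewise evaluates the first two integrals with \eqref{m+1} for $m=1,2$ and the last two with the first two formulas of Lemma~\ref{le: the first four terms in 3rd}, then rewrites everything in terms of $c$ and $K$ via \eqref{monge_ampere constant c} and \eqref{use c and K}. The algebraic reductions you outline (including the expansion of $\kappa_1\sigma_2^2+\kappa_2\sigma_1^2$ and of $(1-c+R^2K)(c+3+3R^2K)-12R^2K$) check out.
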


\vskip.3cm

We finally obtain the integrals of the last two terms in \eqref{3rd coefficient} by the last two formulas in Lemma \ref{le: the first four terms in 3rd}, Lemma \ref{le:phiderivatives}  and similar algebraic manipulations.

\begin{lemma}The following formula holds:
\label{le: the sum of the last two terms in 3rd in terms of c and K}
\begin{multline*}
\int_0^{2\pi}\frac {12R^2[P_3(v)]^2\,d\theta }{(\sigma_1\cos^2\theta + \sigma_2\sin^2\theta)^4} - \int_0^{2\pi}\frac {4RP_4(v)\, d\theta}{(\sigma_1\cos^2\theta + \sigma_2\sin^2\theta)^3}
 \\
= -\frac {\pi R^2}{6 \sqrt{c}} \Bigl[ (1-R \kappa_1)^{-3}(\varphi_{111})^2 + (1-R \kappa_2)^{-3}(\varphi_{222})^2 \Bigr]
 \\
+ \frac{\pi}{8 R^2 c^{5/2}}\Bigl\{(c+3)[R^4K^2 + 2(c-1) R^2 K]-3(c-1)^3\Bigr\}. 
\end{multline*}
\end{lemma}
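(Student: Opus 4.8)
The plan is to feed the two integrals on the left-hand side (which are the last two terms of the integrand in \eqref{3rd coefficient}) into the third and fourth formulas of Lemma~\ref{le: the first four terms in 3rd}, which have already performed the angular integration, and then to use the relations of Lemma~\ref{le:phiderivatives} to eliminate every mixed third- and fourth-order derivative of $\varphi$ at the origin. The key point is that, once this is done, only $(\varphi_{111})^2$, $(\varphi_{222})^2$ and a polynomial in $c$ and $K$ can survive, and the coefficients of the two squares will come out to be exactly $(1-R\kappa_1)^{-3}$ and $(1-R\kappa_2)^{-3}$ up to one common numerical factor.

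First I would handle the $[P_3(v)]^2$ integral. By the third formula of Lemma~\ref{le: the first four terms in 3rd} it equals, up to the factor $\pi/(2^5 3^2 c^{7/2})$, a quadratic form in $\varphi_{111},\varphi_{112},\varphi_{122},\varphi_{222}$ whose coefficients are monomials in $\sigma_1,\sigma_2$. Substituting $\varphi_{122}=-(\sigma_2/\sigma_1)\varphi_{111}$ and $\varphi_{112}=-(\sigma_1/\sigma_2)\varphi_{222}$ from \eqref{3rd original derivatives at 0-1}, every term becomes a multiple of $\sigma_2^3(\varphi_{111})^2$ or $\sigma_1^3(\varphi_{222})^2$, and the numerical coefficients collapse, the count being $5+9-6=8$ for each of the two squares. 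Multiplying by $12R^2$ and using $\sigma_1\sigma_2=c$ from \eqref{monge_ampere constant c} to rewrite $\sigma_2^3/c^{7/2}=(1-R\kappa_1)^{-3}/\sqrt{c}$ and $\sigma_1^3/c^{7/2}=(1-R\kappa_2)^{-3}/\sqrt{c}$, the first integral contributes exactly $\frac{\pi R^2}{3\sqrt{c}}\left[(1-R\kappa_1)^{-3}(\varphi_{111})^2+(1-R\kappa_2)^{-3}(\varphi_{222})^2\right]$, with no polynomial remainder.

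Next I would handle the $P_4(v)$ integral. By the fourth formula of Lemma~\ref{le: the first four terms in 3rd} it equals, up to the factor $\pi/(2^5 c^{5/2})$, the expression $\varphi_{1111}\sigma_2^2+2\varphi_{1122}\sigma_1\sigma_2+\varphi_{2222}\sigma_1^2$. Solving \eqref{4th derivatives at 0-1} for $\varphi_{1111}$ and \eqref{4th derivatives at 0-2} for $\varphi_{2222}$ and inserting them, the three $\varphi_{1122}$ contributions cancel identically; after using $\sigma_1\sigma_2=c$ one is left with $-2R\sigma_2\{\varphi_{111}\varphi_{122}-(\varphi_{112})^2\}-2R\sigma_1\{\varphi_{222}\varphi_{112}-(\varphi_{122})^2\}$ plus the curvature terms $\frac{\sigma_2}{R}[4(c-1)\kappa_1^2+R\kappa_1^2(\kappa_1+3\kappa_2)]+\frac{\sigma_1}{R}[4(c-1)\kappa_2^2+R\kappa_2^2(\kappa_2+3\kappa_1)]$. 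Applying \eqref{3rd original derivatives at 0-1} once more turns the first group into $4Rc^2\left[(1-R\kappa_1)^{-3}(\varphi_{111})^2+(1-R\kappa_2)^{-3}(\varphi_{222})^2\right]$; hence, after multiplying by $-4R$ and by $\pi/(2^5 c^{5/2})$, the $P_4$ integral contributes $-\frac{\pi R^2}{2\sqrt{c}}\left[(1-R\kappa_1)^{-3}(\varphi_{111})^2+(1-R\kappa_2)^{-3}(\varphi_{222})^2\right]$ together with $-\frac{\pi}{8c^{5/2}}\{\sigma_2[4(c-1)\kappa_1^2+R\kappa_1^2(\kappa_1+3\kappa_2)]+\sigma_1[4(c-1)\kappa_2^2+R\kappa_2^2(\kappa_2+3\kappa_1)]\}$. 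Adding the two contributions, the square terms combine with coefficient $\frac{1}{3}-\frac{1}{2}=-\frac{1}{6}$, which is precisely the first line of the claimed identity.

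It then remains to verify that the curvature bracket just displayed equals $\frac{\pi}{8R^2 c^{5/2}}\{(c+3)[R^4K^2+2(c-1)R^2K]-3(c-1)^3\}$. For this I would write $\sigma_j=1-R\kappa_j$, reduce $\sigma_2[4(c-1)\kappa_1^2+R\kappa_1^2(\kappa_1+3\kappa_2)]+\sigma_1[4(c-1)\kappa_2^2+R\kappa_2^2(\kappa_2+3\kappa_1)]$ to the elementary symmetric functions $\kappa_1+\kappa_2$ and $\kappa_1\kappa_2=K$ by the usual power-sum identities, and then substitute $\kappa_1+\kappa_2=(1-c)/R+RK$ from \eqref{use c and K}; setting $m=R^2K$, both sides reduce to the same cubic polynomial in the pair $(c,m)$ and the identity follows. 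The main obstacle I anticipate is pure bookkeeping: one must track the exact powers of $\sigma_1,\sigma_2$ and of $c$ through these repeated substitutions and carry out the symmetric-function reduction of the last step without a sign slip. Conceptually nothing deep happens --- everything rests on the two accidental cancellations (the numerical collapse $5+9-6=8$ in the $[P_3(v)]^2$ term and the vanishing of the $\varphi_{1122}$ terms in the $P_4(v)$ term), both forced by the PDE-derived relations of Lemma~\ref{le:phiderivatives}.
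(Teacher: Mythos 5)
Your proposal is correct and follows essentially the same route the paper sketches: insert the last two formulas of Lemma~\ref{le: the first four terms in 3rd} into the two integrals, eliminate the mixed third- and fourth-order derivatives via Lemma~\ref{le:phiderivatives} (your $5+9-6=8$ collapse and the cancellation of the $\varphi_{1122}$ terms are exactly right), and reduce the remaining curvature bracket with \eqref{use c and K}. Your intermediate coefficients, namely $\frac{\pi R^2}{3\sqrt{c}}$ from the $[P_3(v)]^2$ term, $-\frac{\pi R^2}{2\sqrt{c}}$ from the $P_4(v)$ term, and the final symmetric-function identity for the curvature bracket, all check out.
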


\vskip.4cm

\noindent{\large\bf Acknowledgement.}
\smallskip
The authors would like to thank Professor Reiko Miyaoka for her interest in their work and some useful discussions.
Also, the authors are grateful to the anonymous referees for giving invaluable suggestions to improve the paper.

\end{document}